\newtheorem{thm}{Theorem}[section]
\newtheorem{lem}[thm]{Lemma}
\newtheorem{prop}[thm]{Proposition}
\newtheorem{cor}[thm]{Corollary}
\theoremstyle{definition}
\newtheorem{defn}[thm]{Definition}
\newtheorem{examp}[thm]{Example}
\newtheorem{notn}[thm]{Notation}
\theoremstyle{remark}
\newtheorem{rmk}[thm]{Remark}
\numberwithin{equation}{section}
\begin{document}

\title{The Moduli Space of Hex Spheres}
\author{Aldo-Hilario Cruz-Cota}
\address{Department of Mathematics, Grand Valley State University, Allendale, MI 49401-9401, USA}
\email{cruzal@gvsu.edu}
%\thanks{Department of Mathematics, Grand Valley State University, Allendale, MI 49401-9401, USA, \texttt{aldo@math.ucsb.edu}}

%\subjclass[2010]{Primary \emph{57M20}. Secondary \emph{57M15}}

\keywords{singular Euclidean surfaces, Moduli spaces}

\date{\today}

\dedicatory{}

\begin{abstract}
\noindent A \emph{hex sphere} is a singular Euclidean sphere with four cone points whose cone angles are (integer) multiples of $\frac{2\pi}{3}$ but less than $2\pi$. We prove that the Moduli space of hex spheres of unit area is homeomorphic to the the space of similarity classes of Voronoi polygons in the Euclidean plane. This result gives us as a corollary that each unit-area hex sphere $M$ satisfies the following properties: ($1$) it has an embedded (open Euclidean) annulus  that is disjoint from the singular locus of $M$; ($2$) it embeds isometrically in the 3-dimensional Euclidean space as the boundary of a tetrahedron; and ($3$) there is a simple closed geodesic $\gamma$ in $M$ such that a fractional Dehn twist along $\gamma$ converts $M$ to the double of a parallelogram.
\end{abstract}

\maketitle

\section{Introduction} \label{sect-intro}
A surface is \emph{singular Euclidean} if it is locally modeled on either the Euclidean plane or a Euclidean cone. In this article we study a special type of singular Euclidean spheres that we call \emph{hex spheres}. These are defined as singular Euclidean spheres with four cone points which have cone angles that are multiples of $\frac{2\pi}{3}$ but less than $2\pi$. Singular Euclidean surfaces whose cone angles are multiples of $\frac{2\pi}{3}$ are mainly studied because they arise as limits at infinity of real projective structures. 

We now give examples of hex spheres. Consider a parallelogram $P$ on the Euclidean plane such that two of its interior angles equal $\pi/3$, while the other two equal $2\pi/3$. Such a parallelogram will be called a \emph{perfect parallelogram}. The  double $D$ of a perfect parallelogram $P$ is an example of a hex sphere. This example gives rise to a $3$-parameter family of hex spheres. To see this, let $\gamma$ be the simple closed geodesic in $D$ that is the double of a segment in $P$  that is perpendicular to one of the longest sides of $P$. Then two parameters of the family of hex spheres correspond to the lengths of two adjacent sides of $P$, and the other parameter corresponds to twisting $D$ along $\gamma$.

Let $M$ be a hex sphere. The Gauss-Bonnet Theorem implies that exactly two of the cone angles  of $M$ are equal to $\frac{4\pi}{3}$, while the other two are equal to $\frac{2\pi}{3}$. We consider the Voronoi decomposition of $M$ centered at the two cone points of angle $\frac{4\pi}{3}$. This decomposes $M$ into two cells, the Voronoi cells, which intersect along a graph in $M$, the Voronoi graph. Each Voronoi cell embeds isometrically in a Euclidean cone as a convex geodesic polygon (see \cite{vor-graphs}). By cutting a Voronoi cell along a shortest geodesic passing through the vertex of the cone, the Voronoi cell becomes a polygon on the Euclidean plane. This polygon will be called a \emph{planar Voronoi polygon}.

Our main result is the following:

\begin{thm}
The space of similarity classes of Voronoi polygons in the Euclidean plane is a parameter space for the Moduli space of hex spheres of unit area.
\end{thm}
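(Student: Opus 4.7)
The plan is to exhibit a homeomorphism between the moduli space $\mathcal{M}$ of unit-area hex spheres and the space $\mathcal{V}$ of similarity classes of planar Voronoi polygons by constructing mutually inverse continuous maps.

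\textbf{Forward map $\mathcal{M}\to\mathcal{V}$.} Given a unit-area hex sphere $M$, Gauss--Bonnet forces exactly two cone points $p_1,p_2$ of angle $4\pi/3$ and two cone points $q_1,q_2$ of angle $2\pi/3$. I take the Voronoi decomposition centered at $\{p_1,p_2\}$; each cell $V_i$ embeds in a Euclidean cone of angle $4\pi/3$ as a convex geodesic polygon by the cited result. I then pick $V_1$, choose a shortest geodesic from $p_1$ to $\partial V_1$, and unfold along it to obtain a planar Voronoi polygon $\Pi(M)$. I would verify well-definedness of the similarity class by showing: different shortest geodesics yield congruent unfoldings (by rotation inside the cone before cutting); the other cell $V_2$ yields a congruent polygon, using the equidistance property of the Voronoi graph together with the symmetric roles of $p_1$ and $p_2$; and any isometry of hex spheres transports one decomposition to the other.

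\textbf{Inverse map $\mathcal{V}\to\mathcal{M}$.} Given a planar Voronoi polygon $\Pi$, I identify its two boundary edges coming from the cut to recover a convex polygon $V$ inside a Euclidean cone of angle $4\pi/3$. Taking an isometric copy $V'$, I glue $V$ to $V'$ edgewise along the complement of the cut. I then check that the resulting space $M$ is a topological sphere with two apex cone points of angle $4\pi/3$; its remaining boundary-gluing vertices are either smooth Euclidean points (angles summing to $2\pi$) or cone points, and a Gauss--Bonnet angle-defect count forces exactly two of them to be cone points of angle $2\pi/3$ while the others are smooth. Finally I rescale $M$ to unit area.

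\textbf{Conclusion and main obstacle.} The two constructions are mutually inverse by design, and continuous with respect to the Gromov--Hausdorff topology on $\mathcal{M}$ and the natural parameter topology on $\mathcal{V}$. The hardest step is proving well-definedness of the forward map, specifically that both Voronoi cells of the same hex sphere yield similar planar polygons; this requires the Voronoi equidistance condition in conjunction with the uniqueness of shortest geodesics through cone points, and it is the same input that legitimizes the doubling step in the inverse construction.
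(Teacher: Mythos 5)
Your overall strategy (take the Voronoi decomposition centered at the two cone points of angle $4\pi/3$, unfold each cell to a planar polygon, and reconstruct the sphere by regluing) is the same as the paper's, but there is a genuine gap at the step you treat as routine rather than the one you flag as hardest: the forward map is \emph{not} injective on the moduli space, so your two constructions cannot be mutually inverse. The moduli space here consists of \emph{marked} structures: $[M,h]_{\simeq_\mathcal{M}}=[M',h']_{\simeq_\mathcal{M}}$ requires an orientation-preserving isometry matching the labelled cone points $a,b,c,d$ individually. Relabelling which of the two $2\pi/3$ cone points is called $c$ leaves the Voronoi decomposition, hence the similarity class of the Voronoi polygon, unchanged, but generically produces a different point of $\mathcal{M}_{\text{unit}}(S)$: when the Voronoi cells have $3$ or $4$ edges one of $c,d$ is strictly closer to $a$ and the other strictly closer to $b$ (Proposition \ref{prop-split-mod-sp-hex-str-unit-area}), so the two labellings land in different halves $\mathcal{M}^{+}_{\text{unit}}(S)$ and $\mathcal{M}^{-}_{\text{unit}}(S)$ and are not equivalent. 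This is exactly why the paper splits the moduli space into these two closed halves, proves that \emph{each half} separately is homeomorphic to $\mathcal{MSP}$ (Theorem \ref{thm-hex-spher-to-spec-polyg}), identifies their intersection as the locus where both Voronoi cells have exactly two edges, and concludes that $\mathcal{M}_{\text{unit}}(S)$ is the surface $\widehat{\mathcal{Z}}$ obtained by gluing \emph{two} copies of $\mathcal{Z}\cong\mathcal{MSP}$ along that locus (Theorem \ref{thm-mod-spac-hex-spher}). Your claimed bijection would make the moduli space homeomorphic to $\mathcal{Z}$ (a sphere with two holes), whereas it is in fact $\widehat{\mathcal{Z}}$ (a sphere with three holes); the topology comes out wrong, not just the bookkeeping.

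Two smaller points. First, in the inverse construction, ``glue $V$ to $V'$ edgewise along the complement of the cut'' is underspecified: the edge identifications follow one of three specific combinatorial patterns according to whether the cell has $2$, $3$ or $4$ boundary edges (Theorems \ref{thm_spec_polyg_p2}--\ref{thm_spec_polyg_p4}), and a Gauss--Bonnet count alone does not force an arbitrary edge pairing of two such polygons to produce cone angles that are multiples of $2\pi/3$. Second, continuity in both directions is where the paper does real work: it replaces the similarity class of the polygon by the angle parameters $(\varphi,\alpha)$, observes that the naive parameter map is discontinuous at the transitions between combinatorial types (Example \ref{ex-prop-justif-defn-Z}), and repairs this by passing to the quotient $\mathcal{Z}$ of $\mathcal{Y}$. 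An argument that never confronts the degenerations between the $2$-, $3$- and $4$-edge cases cannot establish that the parametrization is a homeomorphism.
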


Each point in the space of similarity classes of Voronoi polygons is uniquely determined by a pair of angles satisfying certain linear inequalities. Therefore, the main result implies that, roughly speaking, the isometry class of a hex sphere of unit area is determined by a pair of angles.

\begin{cor}
 Every unit-area hex sphere satisfies the following properties: 
\begin{itemize}
 \item it has an embedded, totally geodesic Euclidean annulus of positive width (in particular, every hex sphere has a simple closed geodesic);
\item it embeds isometrically in the 3-dimensional Euclidean space as the boundary of a tetrahedron;
\item there is a simple closed geodesic $\gamma$ in $M$ such that a fractional Dehn twist along $\gamma$ converts $M$ to the double of a perfect parallelogram.
\end{itemize}
\end{cor}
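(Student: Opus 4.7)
The plan is to derive each of the three assertions from the parametrization given by the main theorem, in which $M$ is represented up to isometry by its planar Voronoi polygon $V$. Two copies of $V$, with appropriate identifications, reconstruct $M$; in $V$ the $\frac{4\pi}{3}$ cone point has been split into two vertices whose angles sum to $\frac{4\pi}{3}$ by the cut along a shortest radial geodesic, while the two $\frac{2\pi}{3}$ cone points appear as vertices on the boundary of $V$ lying in the Voronoi graph $G$ of $M$.

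For the first assertion I would construct the annulus as a thickening of a simple closed geodesic $\gamma$ transverse to the axis between the two $\frac{4\pi}{3}$ cone points. Concretely, select a Euclidean segment $\alpha$ in $V$ that meets the two boundary edges corresponding to $G$ perpendicularly at non-singular interior points; after the side identifications that recover $M$, the arc $\alpha$ closes up to a simple closed geodesic $\gamma \subset M$ disjoint from the cone locus. A standard exhaustion argument on flat surfaces with isolated conical singularities then shows that the maximal tubular neighborhood of $\gamma$, consisting of points on open perpendicular geodesic segments from $\gamma$ that do not reach the singular set, is an embedded, totally geodesic, flat annulus of positive width.

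For the second assertion I would invoke Alexandrov's theorem on the isometric realization of convex polyhedra: every singular Euclidean sphere whose cone angles are all at most $2\pi$ is isometric to the boundary of a (possibly degenerate) convex polytope in $\mathbb{R}^{3}$, whose vertex set coincides with the cone locus of the metric. Since each cone angle of $M$ equals $\frac{2\pi}{3}$ or $\frac{4\pi}{3}$, the hypothesis is satisfied, and since $M$ has exactly four cone points, the resulting polytope is a (possibly degenerate) tetrahedron.

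For the third assertion I would use the introduction's description of the $3$-parameter family of doubles of perfect parallelograms together with a twist: after normalizing the area to one, this family has dimension $2$, matching the dimension of the Moduli space parametrized by the main theorem. The closed geodesic $\gamma$ from the first step can be identified, via its behavior in $V$, with a curve along which a suitable fractional Dehn twist alters the side identifications of the two copies of $V$ so as to force $\alpha$ to meet both boundary edges of $G$ at a common point with its two halves perpendicular to the opposite sides; this retwisted identification presents $M$ as the double of a perfect parallelogram with $\gamma$ the double of its perpendicular bisector. I expect the main obstacle to be this last step: one must verify that the family of doubled twisted parallelograms fills the entire Moduli space, and read off the correct fractional twist from the combinatorial data of $V$. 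Assertions (1) and (2) are essentially formal consequences of the Voronoi polygon description and of Alexandrov's theorem, respectively.
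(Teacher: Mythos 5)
Your part (2) takes a genuinely different route from the paper: you invoke Alexandrov's realization theorem (all cone angles are $<2\pi$ and there are exactly four cone points, so the intrinsic metric is that of the boundary of a convex hull of four points). That is legitimate and uniform across all cases, at the cost of citing a deep theorem and of having to accept possibly degenerate tetrahedra (the doubles of parallelograms are flat doubly-covered quadrilaterals); the paper instead builds the tetrahedron by hand from the glued Voronoi polygons, which yields extra information (e.g., in the $2$-special case all four faces are isosceles triangles). For parts (1) and (3), however, there are genuine gaps. For (1), your construction of $\gamma$ as a segment in a single Voronoi polygon meeting the two Voronoi-graph edges perpendicularly fails in the $2$-special case: there the polygon $P_A=(a,c',d,c'')$ has its two graph edges $c'd$ and $dc''$ meeting at the vertex $d$ with angle $\pi/3$, so no Euclidean segment can be perpendicular to both (the angle sum of the resulting triangle would exceed $\pi$). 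The paper handles exactly this case by first reassembling $P_A\sqcup P_B$ into a different planar polygon $Z_1$ in which the relevant identified edges \emph{are} parallel translates of each other, and taking the strip between the two parallels through the cone points $a$ and $b$; some such reassembly, or a separate argument, is needed in your write-up. In the $3$- and $4$-special cases your idea is fine (the edges $c'e'$ and $c''e''$ are parallel by construction), but you still need to check that the chosen perpendicular feet match up under the edge identifications so that the arc closes up after passing through both copies.

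For (3), the dimension count does not prove the claim: equality of dimensions of the twisted-parallelogram family and of the moduli space gives neither surjectivity nor, more importantly, the actual assertion, which is about performing a specific fractional Dehn twist on the \emph{given} surface $M$ along a \emph{given} geodesic $\gamma$ and recognizing the result. The step you flag as the ``main obstacle'' is precisely the content of the proof. The paper carries it out concretely: it drops the perpendiculars from the four cone points onto $\gamma$ inside the developed polygon $Z_1$, twists along $\gamma$ until the feet of the perpendiculars through $a$ and through $d'$ coincide, and then verifies directly that the re-glued surface is the double of the perfect parallelogram $ad'bc''$. Without that explicit verification (including the determination of the twist amount from the positions of the feet, and a check that the resulting quadrilateral really has angles $\pi/3,2\pi/3,\pi/3,2\pi/3$), your argument for (3) is a plausible outline rather than a proof. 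Finally, note that the paper organizes the whole corollary as a case analysis over $2$-, $3$- and $4$-special Voronoi polygons; your proposal should at least acknowledge that the combinatorics of the gluing, and hence the constructions in (1) and (3), differ among these cases.
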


The author would like to thank his PhD adviser, Daryl Cooper, for many helpful discussions. Portions of this work were completed at the University of California, Santa Barbara and Grand Valley State University.

\section{Singular Euclidean Surfaces} \label{sect-SES}
\begin{defn} (\cite{Troyanov1})  \label{defn_cone_manif}
A closed triangulated surface $M$ is \emph{singular Euclidean} if it satisfies the following properties:

\begin{enumerate}
\item For every 2-simplex $T$ of $M$ there is a simplicial homeomorphism $f_T$ of $T$ onto a non-degenerate triangle $f_T(T)$ in the Euclidean plane.
\item If $T_1$ and $T_2$ are two 2-simplices of $M$ with non-empty intersection, then there is an isometry $g_{12}$ of the Euclidean plane such that $f_{T_1}=g_{12}f_{T_2}$ on $T_1 \cap T_2$.
\end{enumerate}
\end{defn}

There is a natural way to measure the length of a curve $\gamma$ in a singular Euclidean surface $M$. This notion of length of curves coincides with the Euclidean length on each triangle of $M$ and it turns $M$ into a path metric space. That is, there is a \emph{distance function} $d_M$ on $M$ for which the distance between two points in $M$ is the infimum of the lengths of the paths in $M$ joining these two points. There is also a natural way to define an area measure in a singular Euclidean surface (\cite{Troyanov1}). This measure coincides with the usual Lebesgue measure on each Euclidean triangle of the surface. 

\begin{defn}
Let $M$ be a singular Euclidean surface $M$ and let $p$ be a point in $M$. The \emph{cone angle} of $M$ at $p$ is either $2\pi$ (if $p$ is not a vertex of $M$) or the sum of the angles of all triangles in $M$ that are incident to $p$ (if $p$ is a vertex of $M$). If $\theta$ is the cone angle of $M$ at $p$, then the number $k=2\pi-\theta$ is the (concentrated) \emph{curvature} of $M$ at $p$.
\end{defn}
The next definition generalizes the concept of tangent plane (see \cite{Burago}).
\begin{defn} (\cite{Cooper})
 Given a singular Euclidean surface $M$ and a point $p \in M$ of cone angle $\theta$, the \emph{tangent cone} $T_pM$ of $M$ at $p$ is the union of the Euclidean tangent cones to all the 2-simplices containing $p$.  The cone $T_pM$ is isometric to a Euclidean cone of angle $\theta$.
\end{defn}
A point $p$ in a singular Euclidean surface $M$ is called \emph{regular} if its cone angle equals $2\pi$. Otherwise it is called a \emph{singular} point or a \emph{cone point}. 
The \emph{singular locus} $\Sigma$ is the set of all singular points in $M$.

\section{Hex Spheres}

\begin{defn} \label{hex_spher}
A \emph{hex sphere} is an oriented singular Euclidean sphere with $4$ cone points whose cone angles are integer multiples of $\frac{2\pi}{3}$ but less than $2\pi$.
\end{defn}

Examples of hex spheres are given in the introduction of this paper.

\noindent $\diamond$ \emph{Why cone angles that are multiples of $\frac{2\pi}{3}$?}
Singular Euclidean surfaces with these cone angles arise naturally as limits at infinity of real projective structures. Real projective structures have been studied extensively by many authors (\cite{Goldman}, \cite{Choi-Goldman}, \cite{Loftin}, \cite{Labourie}, \cite{Hitchin}).

\noindent $\diamond$ \emph{Why $4$ cone points?} The following lemma shows that there is only one singular Euclidean sphere with $3$ cone points whose cone angles satisfy the numeric restrictions we are interested in. This suggests studying the next simplest case (when the singular sphere has $4$ cone points).

\begin{lem} \label{lem-hex-3-cone}
Let $M$ be a singular Euclidean sphere with $k$ singular points and assume that each cone angle of $M$ is an integer multiple of $\frac{2\pi}{3}$. Then $k \geq 3$, and if $k=3$ then $M$ is the double of an Euclidean equilateral triangle.
\end{lem}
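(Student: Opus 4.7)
The plan is to use Gauss--Bonnet to pin down the combinatorics of the cone angles and then extract the isometry type from a geodesic triangulation of $M$.

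\emph{Step 1 (counting cone points).} By the Gauss--Bonnet theorem for singular Euclidean surfaces, the total concentrated curvature equals $2\pi\chi(M) = 4\pi$. I would write each cone angle as $\theta_i = \frac{2\pi m_i}{3}$ with $m_i$ a positive integer and $m_i \neq 3$ (since $\theta_i = 2\pi$ would make the point regular rather than singular). Then
\[
\sum_{i=1}^{k}\left(2\pi - \tfrac{2\pi m_i}{3}\right) = 4\pi \quad\Longleftrightarrow\quad \sum_{i=1}^{k} m_i = 3k-6.
\]
Because each $m_i \geq 1$, this forces $k \leq 3k-6$, so $k \geq 3$. When $k=3$, the only possibility is $m_1 = m_2 = m_3 = 1$, i.e.\ every cone angle equals $\frac{2\pi}{3}$.

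\emph{Step 2 (triangulating with vertex set the cone points).} Next I would invoke the existence of a geodesic triangulation $\mathcal{T}$ of $M$ whose vertex set is exactly $\{p_1,p_2,p_3\}$. With $V=3$ vertices and every face a triangle, Euler's formula $V-E+F=2$ combined with $3F=2E$ forces $E=3$ and $F=2$. Hence $M$ decomposes as two geodesic triangles $T_1, T_2$ glued along all three edges. Write $\ell_{ij}$ for the length of the edge joining $p_i$ to $p_j$; both triangles have side lengths $\ell_{12}, \ell_{23}, \ell_{13}$.

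\emph{Step 3 (equilaterality and the double).} By SSS, $T_1 \cong T_2$ as Euclidean triangles. In either triangle the angle at $p_i$ is the angle opposite the side of length $\ell_{jk}$ (for $\{i,j,k\}=\{1,2,3\}$), so the two triangles contribute \emph{equal} angles at each $p_i$. Since these two angles together equal the cone angle $\frac{2\pi}{3}$, each equals $\frac{\pi}{3}$. Therefore $T_1$ and $T_2$ are congruent equilateral triangles, and $M$ is their double.

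The main obstacle I anticipate is Step 2: justifying that $M$ admits a geodesic triangulation with vertex set \emph{exactly} the cone points. The definition in Section~\ref{sect-SES} provides \emph{some} triangulation, but it will in general have additional (regular) vertices. One then has to argue that such auxiliary vertices can be removed---straightening edges through a non-singular vertex where two triangles are coplanar, and absorbing interior non-singular vertices into adjacent faces---without destroying the triangulation structure. Alternatively, one could bypass this by exhibiting three shortest geodesic arcs between the three cone points and verifying directly (using the short cone angles $\tfrac{2\pi}{3} < \pi$) that they can be taken to have pairwise disjoint interiors, thereby cutting $M$ into two triangular disks.
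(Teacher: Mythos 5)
Your Step 1 is exactly the Gauss--Bonnet half of the paper's argument and is complete: the identity $\sum m_i = 3k-6$ with $m_i\geq 1$ gives $k\geq 3$, and $k=3$ forces all cone angles to equal $\frac{2\pi}{3}$. For the second half the paper does not give a direct argument at all --- it simply defers to Proposition 4.4 of \cite{Cooper} --- so your triangulate-and-double route is genuinely different: it is self-contained and elementary, at the price of the existence statement you honestly flag. That flagged step is a real gap and is precisely the content being outsourced to \cite{Cooper}: the triangulation furnished by Definition \ref{defn_cone_manif} has extra regular vertices, and you must either remove them or cite the standard fact that a closed Euclidean cone surface with nonempty singular set admits a geodesic (Delaunay) triangulation whose vertex set is exactly the singular set. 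Your alternative suggestion --- join the three cone points pairwise by shortest arcs and use $\frac{2\pi}{3}<\pi$ to see these arcs have disjoint interiors and cut $M$ into two disks --- is probably the cleanest way to close it without an external citation. Two smaller points you should make explicit: (a) Euler's formula gives $V=3$, $E=3$, $F=2$, but you silently assume the $1$-skeleton is the complete graph $K_3$ rather than a multigraph with a doubled edge or a loop; this does follow, since a connected graph on $3$ vertices with $3$ edges that is not $K_3$ cannot bound two triangular faces in $S^2$ (one face would be a bigon, monogon, or a face traversing an edge twice), but it needs saying. (b) Having identified the two congruent triangles, you should note that each edge of the skeleton is shared by one side of $T_1$ and one side of $T_2$ and the gluing isometry fixes both endpoints (it cannot swap them, as the endpoints are distinct cone points), hence is the identity on each edge --- this is what makes $M$ literally the \emph{double} rather than some other edge identification. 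With those two remarks and a reference for the triangulation, your proof is correct.
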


The proof of Lemma \ref{lem-hex-3-cone} follows from the Gauss-Bonnet Theorem and Proposition 4.4 from \cite{Cooper}. The Gauss-Bonnet theorem also tells us the sizes of the cone angles of a hex sphere:

\begin{lem} \label{sizes_con-angles}
Exactly two of the cone angles of a hex sphere equal $\frac{2\pi}{3}$ while the other two equal $\frac{4\pi}{3}$.
\end{lem}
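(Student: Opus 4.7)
The plan is to use the Gauss-Bonnet theorem for singular Euclidean surfaces together with the restrictive hypotheses on the cone angles of a hex sphere.

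First I would pin down the list of allowable cone angles. By definition, each cone angle of a hex sphere is a positive integer multiple of $\frac{2\pi}{3}$ that is strictly less than $2\pi$. The only such values are $\frac{2\pi}{3}$ and $\frac{4\pi}{3}$, giving concentrated curvatures $\frac{4\pi}{3}$ and $\frac{2\pi}{3}$ respectively. So if I let $a$ denote the number of cone points of angle $\frac{2\pi}{3}$ and $b$ the number of cone points of angle $\frac{4\pi}{3}$, then by Definition~\ref{hex_spher} we have $a+b=4$ and $a,b\ge 0$.

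Next I would invoke the Gauss-Bonnet theorem for singular Euclidean surfaces, which says that the sum of the concentrated curvatures at the cone points equals $2\pi\chi(M)$. Since $M$ is a sphere, $\chi(M)=2$, so the total curvature is $4\pi$. Substituting the two admissible curvature values yields the single linear equation
\begin{equation*}
a\cdot\tfrac{4\pi}{3}+b\cdot\tfrac{2\pi}{3}=4\pi,\qquad\text{i.e.,}\qquad 2a+b=6.
\end{equation*}

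Finally I would solve the $2\times 2$ linear system $a+b=4$, $2a+b=6$ to conclude $a=b=2$. Since $a$ and $b$ automatically came out as nonnegative integers, no further case analysis is needed. There is no real obstacle here: the only subtlety is making sure the angle restriction ``integer multiple of $\frac{2\pi}{3}$ but less than $2\pi$'' is applied correctly to rule out every value except $\frac{2\pi}{3}$ and $\frac{4\pi}{3}$, after which the Gauss-Bonnet calculation determines $(a,b)$ uniquely.
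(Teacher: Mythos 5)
Your proof is correct and follows exactly the route the paper intends: the paper gives no written proof beyond remarking that the lemma follows from the Gauss--Bonnet theorem, and your argument is precisely that computation, with the angle restriction correctly reducing the possibilities to $\frac{2\pi}{3}$ and $\frac{4\pi}{3}$ and the linear system forcing two of each.
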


From now on, we will use the following notation:

\begin{itemize}
\item $M$ will denote a hex sphere.
\item $a$ and $b$ will denote the two cone points in $M$ of angle $\frac{4\pi}{3}$.
\item $c$ and $d$ will denote the two cone points in $M$ of angle $\frac{2\pi}{3}$.
\item $\Sigma=\{a,b,c,d\}$ will denote the singular locus of $M$.
\end{itemize}

\begin{defn} \label{defn-vor-reg}
 The (open) \emph{Voronoi region} $Vor(a)$ centered at $a$ is the set of points in $M$ consisting of:
\begin{itemize}
 \item the cone point $a$, and
 \item all non-singular points $x$ in $M$ such that 
  \begin{enumerate}
   \item $d_M(a,x) < d_M(b,x)$ and
   \item there exists a \emph{unique} shortest geodesic from $x$ to $a$.
  \end{enumerate}
\end{itemize}
\end{defn}

The (open) Voronoi region $Vor(b)$ centered at $b$ is defined by swapping the roles of $a$ and $b$ in Definition \ref{defn-vor-reg}. It is shown in \cite{Boileau} and \cite{Cooper} that, if $p \in \{a,b\}$, then there is a natural isometric embedding $f_p$ of $Vor(p)$ into the cone $T_pM$. We will use the following notation:

\begin{itemize}
 \item $A$ will be the closure of $f_a(Vor(a))$ in $T_aM$.
 \item $B$ will be the closure of $f_b(Vor(b))$ in $T_bM$.
 \item $\sqcup$ denotes the disjoint union of sets.
\end{itemize}

\begin{comment}
In \cite{vor-graphs}, the author shows that $A$ and $B$ are isometric, convex, geodesic polygons embedded in a Euclidean cone. Furthermore, the hex sphere $M$ can be recovered from the disjoint union of $A$ and $B$ by identifying pairs of edges on their boundaries according to one of $3$ possible combinatorial patterns. The image of the boundaries of $A$ and $B$ under this identification is a connected graph $\Gamma$ in $M$, which is called the \emph{Voronoi graph} of $M$.

For the rest of this article we will use the following notation:
\begin{itemize}
\item $n$ will be the number of vertices of the graph $\Gamma$;
\item $p$ will be the number of edges on the boundary of $A$, which also equals the number of edges on the boundary of $B$.
\end{itemize}
\end{comment}

\section{The Moduli Space of Hex Spheres}

\begin{defn} \label{top_hex_spher}
 A \emph{topological hex sphere} is an oriented triangulated sphere $S$ together with $4$ pairs $(x_1,\theta_1)$, $(x_2,\theta_2)$, $(x_3,\theta_3)$, $(x_4,\theta_4)$ such that:
\begin{itemize}
\item The $x_i$'s are distinct points in the surface $S$.
\item Each $\theta_i>0$ is an integer multiple of $2\pi/3$ but less than $2\pi$.
\item $\sum^{4}_{i=1}(2\pi-\theta_{i})=2\pi\chi(S)$. [$\chi(S)$ is the Euler characteristic of $S$.]
\end{itemize}
\end{defn}

The point $x_i$ will be called a ``cone point'' in $S$ of ``cone angle'' $\theta_i$.

\begin{defn}
A \emph{hex structure} marked by an topological hex sphere $S$ is a pair $(M,h)$, which consists of a hex sphere $M$ together with an orientation-preserving piecewise-linear homeomorphism $h \colon S \to M$. We also require that the homeomorphism $h$ sends a cone point of $S$ to a  cone point of $M$ with the same cone angle.
\end{defn}

The set of all hex structures marked by $S$ will be denoted by $\mathcal{H}(S)$.

\begin{defn}
We define the equivalence relation $\simeq_\mathcal{M}$ on $\mathcal{H}(S)$ as follows: $(M,h) \simeq_\mathcal{M} (M',h')$ if and only if there is an orientation-preserving isometry $g \colon M \to M'$ which preserves the cone points (i.e., if the $x_i$'s are the cone points of $S$, then $g(h(x_i))=h'(x_i)$ for each $i$). The (oriented) \emph{moduli space} $\mathcal{M}(S)$ of $S$ is defined as the set $\mathcal{H}(S)$ modulo the equivalence relation $\simeq_\mathcal{M}$.
\end{defn}

The $\simeq_\mathcal{M}$ equivalence class of $(M,h)$ will be denoted by $[M,h]_{\simeq_\mathcal{M}} \in \mathcal{M}(S)$.

\begin{defn} \label{defn_metric_Mod}
Let $\textbf{d}_{\mathcal{M}(S)}$ be the map of $\mathcal{M}(S) \times \mathcal{M}(S)$ into the non-negative reals defined by $\textbf{d}_{\mathcal{M}(S)}([M,h]_{\simeq_\mathcal{M}},[M',h']_{\simeq_\mathcal{M}})=
\log \inf K$. The infimum is taken over all numbers $K \geq 1$ such that there is an orientation-preserving PL $K$-bi-Lipschitz homeomorphism $g \colon M \to M'$ preserving the cone points of $S$ and the Voronoi graphs of the hex spheres. 
\end{defn}

The reader can check that $\textbf{d}_{\mathcal{M}(S)}$ is a metric on $\mathcal{M}(S)$.

\section{Special Polygons} \label{sec-ang-param-spec-polyg}

Cutting each of the Voronoi cells $A \subset T_aM$ and $B \subset T_bM$ along a shortest geodesic passing through the vertex of the cone it lies on, we obtain the planar \emph{Voronoi polygons} $P_A$ and $P_B$ of $M$. These polygons satisfy certain properties, depending on the number $p$ of edges of $A$ ($p$ can only be equal to $2$, $3$ or $4$, see \cite{vor-graphs}).

\begin{defn} \label{defn_spec_polyg_p2}
A planar polygon $P$ is \emph{$2$-special} if it satisfies the following:
\begin{enumerate}
 \item $P$ has $4$ sides.
 \item There is exactly one vertex $a$ of $P$ at which the corner angle equals $4\pi/3$.
 \item The corner angle of $P$ at a vertex different than $a$ is less than  $\pi$.
 \item The corner angle of $P$ at the vertex opposite to $a$ equals $\pi/3$.
 \item Vertices of $P$ that are different than $a$ are equidistant from it.
 \end{enumerate}
\end{defn}

We reformulate the main result of \cite{vor-graphs} for the case $p=2$ as follows.

\begin{thm} \label{thm_spec_polyg_p2}
If $p=2$, then the (isometric) planar polygons $P_A$ and $P_B$ are $2$-special. Furthermore, the hex sphere $M$ can be recovered from the disjoint union of the polygons $P_A$ and $P_B$ by identifying the edges on the boundaries  of $P_A$ and $P_B$ as shown in Figure \ref{fig_spec_polyg_p2}. 
\end{thm}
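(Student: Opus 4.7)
My plan is to apply the main classification theorem of \cite{vor-graphs} and extract the specific content when $p = 2$. First, I would pin down the combinatorial type of the Voronoi graph $\Gamma$ on $M$. Since $\Gamma$ decomposes $S^2$ into the two faces $Vor(a)$ and $Vor(b)$, Euler's formula yields $V - E + 2 = 2$, so $V = E = p = 2$. The cone points $c$ and $d$ (of cone angle $2\pi/3 < \pi$) cannot lie in the interior of a Voronoi edge, so they must be vertices of $\Gamma$; hence the two vertices are precisely $c$ and $d$, each of valence $2$, with both Voronoi edges joining them.

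With this combinatorial structure in hand, I would construct $P_A$ explicitly. The Voronoi cell $A \subset T_aM$ is a convex geodesic polygon containing the cone vertex $a$ in its interior, with two boundary edges meeting at the single lifts $c'$ and $d'$ of $c$ and $d$. Cutting along a shortest geodesic from $a$ to $c'$ and unfolding $T_aM$ into the Euclidean plane produces $P_A$ with cyclically ordered vertices $a,\ c_+',\ d',\ c_-'$ and four edges (two copies of the cut followed by the two Voronoi arcs). This directly verifies Property (1); Property (2) holds because unfolding opens $a$ into the full cone angle $4\pi/3$; and Property (3) follows from convexity of $A$, which forces each interior angle at a non-apex vertex to be strictly less than $\pi$ (strictness coming from the explicit description in \cite{vor-graphs}).

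For Property (4), the two Voronoi edges at $d$ divide the cone angle $2\pi/3$ into the sectors belonging to $Vor(a)$ and $Vor(b)$; linearizing the bisector condition $d(x,a) = d(x,b)$ at the equidistant point $d$ shows these sectors are reflections of each other, so each equals $\pi/3$. For Property (5), I would use that $A$ is a geodesic digon in $T_aM$: the two Voronoi arcs are distinct geodesic segments from $c'$ to $d'$ inside a cone of angle $4\pi/3$, forcing them to pass on different sides of the cone vertex $a$, and the resulting symmetry of the digon forces $d(a,c') = d(a,d')$. The gluing pattern of Figure \ref{fig_spec_polyg_p2} then records the identifications of the cut edges and the paired Voronoi edges of $P_A \sqcup P_B$ that reconstruct $M$. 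The main obstacle will be Properties (4) and (5), which require genuine use of the equidistance structure of the Voronoi bisector at a cone point and the rigidity of a geodesic digon in a cone of angle $4\pi/3$ — precisely the geometric inputs supplied by \cite{vor-graphs}.
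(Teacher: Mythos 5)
The first thing to say is that the paper does not actually prove Theorem \ref{thm_spec_polyg_p2}: it is introduced with the sentence ``We reformulate the main result of \cite{vor-graphs} for the case $p=2$ as follows,'' so the official argument is a citation of the classification carried out in \cite{vor-graphs}. Your attempt to derive the statement from first principles is therefore a genuinely different and more ambitious route. Its skeleton --- Euler's formula applied to the Voronoi graph $\Gamma$, the observation that a cone point of angle $2\pi/3<2\pi$ cannot lie in the interior of a geodesic edge and so must be a vertex of $\Gamma$, and the unfolding of the digon $A\subset T_aM$ into a planar quadrilateral --- is the right skeleton. But two combinatorial steps are glossed over. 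The identity $E=p$ conflates the number of edges of $\Gamma$ with the number of edges of $\partial A$; these agree only once one knows that every edge of $\Gamma$ has $Vor(a)$ on one side and $Vor(b)$ on the other (equivalently that $\partial A$ and $\partial B$ have the same number of edges), which is not automatic from Definition \ref{defn-vor-reg}, since an edge of $\Gamma$ could a priori consist of points with two shortest geodesics to the nearer center and have the same cell on both sides. And with $V=E=2$, Euler's formula alone does not exclude the valence pattern $(1,3)$, i.e.\ a loop at one of $c,d$ together with a single arc joining them; the paper itself invokes Lemmas 6.4 and 6.5 of \cite{vor-graphs} precisely to control these valences.

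The more serious gap is your justification of Property (5) of Definition \ref{defn_spec_polyg_p2}. A geodesic digon enclosing the apex of a $4\pi/3$-cone is \emph{not} forced by any symmetry to have its two vertices equidistant from the apex; such digons exist with $d(a,c')\neq d(a,d')$, so the rigidity you appeal to is not there at that level of generality. What actually pins down $d(a,c')=d(a,d')$ is the conjunction of (i) the first-variation/bisector computation that you carry out at $d$, applied at \emph{both} $c$ and $d$, which gives corner angle $\pi/3$ for $A$ at each of $c'$ and $d'$; and (ii) an elementary uniqueness argument in the unfolded quadrilateral $ac_+'d'c_-'$: here $|ac_+'|=|ac_-'|$ is automatic from the cut, the angle at $a$ is $4\pi/3$ and at $d'$ is $\pi/3$, so the angles $\gamma_\pm$ at $c_\pm'$ satisfy $\gamma_++\gamma_-=\pi/3$; the law of sines in the two triangles $ac_\pm'd'$ gives $\sin\gamma_+\sin\delta_2=\sin\gamma_-\sin\delta_1$ with $\delta_1+\delta_2=\pi/3$ the split of the angle at $d'$, whose unique solution is $\delta_1=\gamma_+$, forcing $|ad'|=|ac_+'|$. (Alternatively one simply quotes \cite{vor-graphs}, as the paper does.) As written, your argument for (5) would not survive scrutiny, and (4) likewise needs the bisector computation at $c$ as well as at $d$ before the unfolded polygon has the angles shown in Figure \ref{fig_spec_polyg_p2}.
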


\begin{figure}[ht!]
\labellist
\small\hair 3pt

\pinlabel $a$ [b] at 87 603
\pinlabel $\frac{4\pi}{3}$ [t] at 86 588 
\pinlabel $\frac{\pi}{3}$ [b] at 95 436
% \pinlabel $P_A$ at 91 512
\pinlabel $P_A$ at 91 682

\pinlabel $b$ [b] at 497 603
\pinlabel $\frac{4\pi}{3}$ [t] at 495 588 
\pinlabel $\frac{\pi}{3}$ [b] at 504 436
\pinlabel $P_B$ at 500 682
\endlabellist
\centering
  \includegraphics[width=0.5\textwidth]{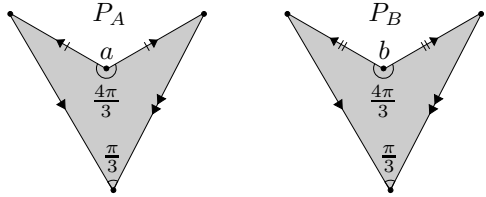}
  \caption{Recovering $M$ from $P_A \sqcup P_B$  when $p=2$}
  \label{fig_spec_polyg_p2}
\end{figure}

\begin{defn} \label{defn_spec_polyg_p3}
A planar polygon $P$ is \emph{$3$-special} if it satisfies the following:
\begin{enumerate}
 \item $P$ has $5$ sides.
 \item There is exactly one vertex $a$ of $P$ at which the corner angle equals $4\pi/3$.
 \item The corner angle of $P$ at a vertex different than $a$ is less than  $\pi$.
 \item The corner angle of $P$ at each vertex adjacent to $a$ equals $\pi/3$.
 \item The two sides of $P$ that have $a$ as an endpoint have the same length. Let $ac'$ and $ac''$ be these two sides of $P$.
 \item Let $e'$ be the vertex of $P$ that is adjacent to $c'$ but different than $a$ and, similarly, let $e''$ be the vertex of $P$ that is adjacent to $c''$ but different than $a$. Then the sides $c'e'$ and $c''e''$ of $P$ are parallel and have the same length. 
 \item The corner angles of $P$ at the vertices $e'$ and $e''$ equal $\pi/2$. 
 \end{enumerate}
\end{defn}

Using Definition \ref{defn_spec_polyg_p3}, the main result of \cite{vor-graphs} becomes the following.

\begin{thm} \label{thm_spec_polyg_p3}
If $p=3$, then the (isometric) planar polygons $P_A$ and $P_B$ are $3$-special. Furthermore, the hex sphere $M$ can be recovered from the disjoint union of the polygons $P_A$ and $P_B$ by identifying the edges on the boundaries of $P_A$ and $P_B$ as shown in Figure \ref{fig_spec_polyg_p3}. 
\end{thm}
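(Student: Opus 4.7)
The plan is to unfold the Voronoi cells and read the seven conditions of Definition \ref{defn_spec_polyg_p3} directly off the description of $A$ and $B$ furnished by \cite{vor-graphs}. By the main theorem of that paper, in the case $p=3$ the closed cells $A \subset T_aM$ and $B \subset T_bM$ are isometric convex geodesic polygons with three boundary edges in their cones of angle $4\pi/3$, each having the cone apex as a vertex at which the interior angle equals the full cone angle, and the edges of $\partial A$ and $\partial B$ are paired by a specific combinatorial pattern whose quotient is $M$. Everything to prove is then a direct translation of that result into the planar language of $3$-specialness.

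First I would cut $A$ along a shortest geodesic from $a$ to the vertex of $\partial A$ opposite to $a$ and unfold the resulting region into the Euclidean plane, obtaining $P_A$. Because $T_aM$ has cone angle $4\pi/3$, the unfolding duplicates the cut into two edges of equal length meeting at $a$ with interior angle $4\pi/3$; this gives items (1), (2), (5). Convexity of $A$ in the cone delivers item (3). For item (4), the opposite vertex is a $2\pi/3$-cone point of $M$, and the cut bisects its corner because the cut geodesic meets the Voronoi graph orthogonally at this endpoint (a consequence of the equidistance condition defining the Voronoi graph), so each of the two resulting corners in $P_A$ equals $\pi/3$. Applying the same cut and unfolding to $B$ produces an isometric copy $P_B$.

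Items (6) and (7) then follow from the gluing pattern. The edges of $P_A$ not adjacent to $a$ are identified with those of $P_B$ by isometries under which $e'$ in $P_A$ corresponds to $e''$ in $P_B$ (and conversely). The four corner angles coming together at each such image vertex in $M$ must sum to $2\pi$ since $M$ is locally Euclidean there; the isometry $P_A \cong P_B$ and the symmetry of the cut make the four angles equal, so each is $\pi/2$, proving (7). The paired edges $c'e'$ and $c''e''$ in each polygon then also have the same length and are parallel, since orientation-compatible gluings of equal-length isometric polygons along matched edges force this, giving (6). Finally, the edge identification pattern on the planar polygons matches Figure \ref{fig_spec_polyg_p3} by construction, and the reconstruction of $M$ from $P_A \sqcup P_B$ is the original content of the main theorem of \cite{vor-graphs}.

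The main obstacle is the combinatorial bookkeeping in identifying which vertices and edges of $P_A$ map to which of $P_B$ under the gluing, so that the angle and length equalities in (4), (6), (7) are correctly aligned and the identification pattern of Figure \ref{fig_spec_polyg_p3} emerges from the unfolded picture rather than some different edge-pairing pattern; once this is done, each item of Definition \ref{defn_spec_polyg_p3} is a direct consequence of the already-established structure of the Voronoi cells.
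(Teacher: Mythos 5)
The first thing to note is that the paper does not prove this statement at all: Theorem \ref{thm_spec_polyg_p3} is introduced with the sentence ``Using Definition \ref{defn_spec_polyg_p3}, the main result of \cite{vor-graphs} becomes the following,'' i.e.\ it is a pure restatement, in the planar language of $3$-special polygons, of a theorem established in the companion paper \cite{vor-graphs}. Your overall strategy --- unfold the closed Voronoi cell by cutting along the shortest geodesic through the cone apex and read the seven conditions off the resulting planar polygon --- is the right picture and is presumably how \cite{vor-graphs} proceeds, so the route is not objectionable. But since you are in effect reproving the external result rather than invoking it, the sketch has to stand on its own, and several of the genuinely geometric steps are asserted rather than proved.

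Concretely: (a) Your justification of condition (4) is off. In $P_A$ the cut endpoint becomes the two vertices $c'$, $c''$ \emph{adjacent} to $a$ (not opposite to it), and the claim that the cut bisects the $2\pi/3$ corner there ``because the cut geodesic meets the Voronoi graph orthogonally'' does not work: the endpoint is a vertex of the Voronoi graph, where orthogonality to the graph is not defined, and even at an interior point of an edge orthogonality would not by itself split a $2\pi/3$ corner into two equal $\pi/3$ corners. This needs the actual equidistance/shortest-geodesic analysis at the cone point $c$. (b) Your derivation of the right angles in condition (7) appeals to ``the symmetry of the cut'' making the four corners at the identified vertex equal; that symmetry of $P_A$ about the bisector at $a$ is logically equivalent to the conditions (4)--(7) you are trying to establish, so as written the argument is circular. (c) Condition (6) is not a formal consequence of ``orientation-compatible gluings of isometric polygons'': the parallelism of $c'e'$ and $c''e''$ follows from the two $\pi/2$ angles at $e'$, $e''$ (an exterior-angle computation), and the equality of their lengths requires knowing which edges of $\partial A$ are matched to which edges of $\partial B$, i.e.\ exactly the combinatorial content of the Voronoi-graph analysis in \cite{vor-graphs}. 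If you are permitted simply to cite the main theorem of \cite{vor-graphs} in the form the paper does, the statement follows with no work; if you are not, then (a)--(c) are the substantive gaps to close.
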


\begin{figure}[ht!]
\labellist
\small\hair 3pt

\pinlabel $a$ [b] at 86 655
\pinlabel $c'$ [b] at -57 730
\pinlabel $c''$ [b] at 233 730
\pinlabel $e'$ [t] at -57 525
\pinlabel $e''$ [t] at 231 525
\pinlabel $\frac{4\pi}{3}$ [t] at 86 628
\pinlabel $\frac{\pi}{3}$ [tl] at -53 709
\pinlabel $\frac{\pi}{3}$ [tr] at 225 709
\pinlabel $P_A$ at 9 588

\pinlabel $b$ [b] at 520 654
\pinlabel $d'$ [b] at 379 730
\pinlabel $d''$ [b] at 668 730
\pinlabel $e'''$ [t] at 382 525
\pinlabel $e''''$ [t] at 671 525
\pinlabel $\frac{4\pi}{3}$ [t] at 520 628
\pinlabel $\frac{\pi}{3}$ [tl] at 381 709
\pinlabel $\frac{\pi}{3}$ [tr] at 659 709
\pinlabel $P_B$ at 444 588
\endlabellist
\centering
  \includegraphics[width=0.5\textwidth]{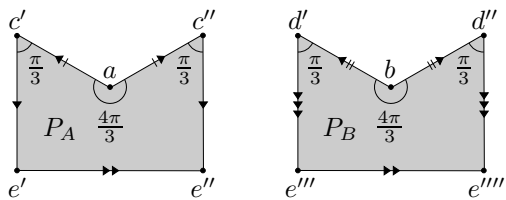}
  \caption{Recovering $M$ from $P_A \sqcup P_B$  when $p=3$}
  \label{fig_spec_polyg_p3}
\end{figure}

\begin{defn} \label{defn_spec_polyg_p4}
A planar polygon $P$ is \emph{$4$-special} if it satisfies conditions $(2)-(6)$ from Definition \ref{defn_spec_polyg_p3} as well as the following:
\begin{enumerate}
 \item $P$ has $6$ sides.
 \item Let $f'$ be the only vertex of $P$ that is different from $a$, $c'$, $c''$, $e'$ and $e''$. Then the vertices $e'$, $e''$ and $f'$ are equidistant from the vertex $a$.
\end{enumerate}
\end{defn}

Reformulating the main result of \cite{vor-graphs} in terms of Definition \ref{defn_spec_polyg_p4}, we get:

\begin{thm} \label{thm_spec_polyg_p4}
If $p=4$, then the (isometric) planar polygons $P_A$ and $P_B$ are $4$-special. Furthermore, the hex sphere $M$ can be recovered from the disjoint union of the polygons $P_A$ and $P_B$ by identifying the edges on the boundaries of $P_A$ and $P_B$ as shown in Figure \ref{fig_spec_polyg_p4}. 
\end{thm}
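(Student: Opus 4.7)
The plan is to recast the main theorem of \cite{vor-graphs} in the language of Definition \ref{defn_spec_polyg_p4}. From \cite{vor-graphs} I would extract the following for the case $p=4$: the Voronoi cells $A \subset T_aM$ and $B \subset T_bM$ are isometric convex geodesic polygons with four boundary edges each, and $M$ is reconstructed from $A \sqcup B$ by a specific pairing of boundary edges. I would then cut $A$ along a shortest geodesic from the apex $a$ to the boundary vertex $c$ of $\partial A$ that corresponds to a cone point of $M$ of cone angle $2\pi/3$. This cut splits $c$ into two copies $c'$, $c''$, unfolds the cone into the plane, and yields a planar polygon $P_A$ whose cyclic vertex list is $a, c', e', f', e'', c''$. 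Since the cut adds two sides to the $p=4$ boundary sides, $P_A$ has six sides, which is condition $(1)$ of Definition \ref{defn_spec_polyg_p4}.

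Next I would verify conditions $(2)$--$(6)$ of Definition \ref{defn_spec_polyg_p3}. Condition $(2)$: the apex unfolds to a single vertex of $P_A$ of corner angle $4\pi/3$, and convexity of $A$ in $T_aM$ prevents any other vertex from realising this angle. Condition $(3)$: convexity of $A$, proved in \cite{vor-graphs}, forces every non-apex corner angle of $P_A$ to be strictly less than $\pi$. Condition $(4)$: the corner angles at $c'$ and $c''$ each equal half of the cone angle at $c$, that is, $\pi/3$. Condition $(5)$: the sides $ac'$ and $ac''$ are two copies of the cut geodesic, hence of equal length. Condition $(6)$: the sides $c'e'$ and $c''e''$ are identified, under the boundary gluing of \cite{vor-graphs}, with a pair of sides of $P_B$ meeting at the cone point $d$ of $M$; combining the isometry $A \cong B$ with this pairing forces $c'e'$ and $c''e''$ to be parallel segments of equal length. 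For the new condition $(2)$ of Definition \ref{defn_spec_polyg_p4}, the vertices $e'$, $e''$, $f'$ are precisely the three non-cone vertices of the Voronoi graph $\Gamma$ of $M$; by the defining property of $\Gamma$ each is equidistant from $a$ and $b$ in $M$, and the $p=4$ refinement of this in \cite{vor-graphs} pins this common $d_M(\cdot,a)$--value to the circumradius of the Voronoi cell.

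Finally, the recovery of $M$ from $P_A \sqcup P_B$ by the identifications of Figure \ref{fig_spec_polyg_p4} is the composition of regluing each cut (identifying $ac'$ to $ac''$ in $P_A$, and the analogous pair in $P_B$) with the boundary-edge pairing supplied by \cite{vor-graphs}. The main obstacle I anticipate is the parallelism clause of condition $(6)$: to show that $c'e'$ and $c''e''$ are literally parallel translates of one another, not merely congruent, one must track precisely how the boundary identification acts as a rigid motion and verify that, after unfolding, this motion descends to a pure translation in the plane. This reduces to an angle-chase at the vertices $c'$, $c''$ and at the apex, using the angles $4\pi/3$ at $a$ together with the $\pi/3$ corners at $c'$, $c''$; the rest of the proof is a bookkeeping exercise translating \cite{vor-graphs} into the present vocabulary.
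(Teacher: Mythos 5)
The paper gives no proof of Theorem \ref{thm_spec_polyg_p4}: it is stated as a direct reformulation of the main result of \cite{vor-graphs}, with all the substantive geometry (convexity and isometry of the Voronoi cells, the structure of the Voronoi graph, the edge pairings) deferred to that reference. Your proposal takes essentially the same route---unpacking the cut-and-unfold construction and checking the clauses of Definition \ref{defn_spec_polyg_p4} one by one against the structure results of \cite{vor-graphs}---and the step you flag as delicate, the parallelism in condition $(6)$, does follow from exactly the angle chase you describe, since the directions of $c'e'$ and $c''e''$ each differ from that of $ac'$ by $2\pi/3$ once the corner angles $4\pi/3$, $\pi/3$, $\pi/3$ at $a$, $c'$, $c''$ are established.
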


\begin{figure}[ht!]
\labellist
\small\hair 3pt

\pinlabel $a$ [b] at 70 687
\pinlabel $c'$ [r] at -66 768
\pinlabel $c''$ [l] at 210 768
\pinlabel $e'$ [r] at -68 533
\pinlabel $e''$ [l] at 211 533
\pinlabel $f'$ [t] at 98 478
\pinlabel $\frac{4\pi}{3}$ [t] at 70 660
\pinlabel $\frac{\pi}{3}$ [tl] at -70 740
\pinlabel $\frac{\pi}{3}$ [tr] at 208 740
\pinlabel $P_A$ at 98 550

\pinlabel $b$ [b] at 516 687
\pinlabel $d'$ [r] at 380 768
\pinlabel $d''$ [l] at 655 768
\pinlabel $f''$ [r] at 379 533
\pinlabel $f'''$ [l] at 655 533
\pinlabel $e'''$ [t] at 544 478
\pinlabel $\frac{4\pi}{3}$ [t] at 516 660
\pinlabel $\frac{\pi}{3}$ [tl] at 376 740
\pinlabel $\frac{\pi}{3}$ [tr] at 654 740
\pinlabel $P_B$ at 544 550
\endlabellist
\centering
  \includegraphics[width=0.5\textwidth]{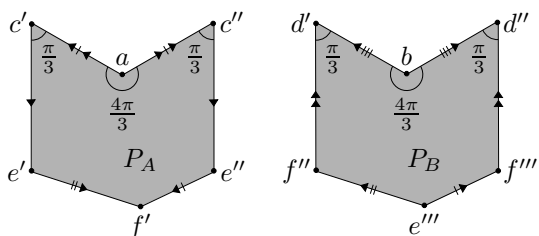}
  \caption{Recovering $M$ from $P_A \sqcup P_B$  when $p=4$}
  \label{fig_spec_polyg_p4}
\end{figure}

\begin{defn} \label{defn_spec_polyg}
A planar polygon $P$ is \emph{special} if it is $n$-special for some $n \in \{2,3,4\}$.
\end{defn}

\begin{rmk} \label{rmk_gen_spec_polyg}
Observe that a ``generic'' special polygon is $4$-special. In fact:
\begin{itemize}
 \item A $3$-special polygon is a degenerate $4$-special polygon in which a vertex merges with one of its two adjacent vertices.
 \item A $2$-special polygon is a degenerate $4$-special polygon whose pair of parallel sides  collapse to points.
\end{itemize}
\end{rmk}

\begin{defn} \label{defn_angle-param}
 Let $P$ be a special polygon and let $a$ be the unique vertex of $P$ at which the corner angle equals $4\pi/3$. Suppose that if we traverse the boundary of $P$ counterclockwisely, starting and ending at the vertex $a$, we encounter the vertices 
\begin{itemize}
 \item $a,c',d,c'',a$ if $P$ is $2$-special;
 \item $a,c',e',e'',c'',a$ if $P$ is $3$-special;
 \item $a,c',e',f',e'',c'',a$ if $P$ is $4$-special.
\end{itemize}
We define the \emph{angles} $\varphi=\varphi(P)$, $\alpha=\alpha(P)$ and $\beta=\beta(P)$ as follows:
\begin{itemize}
 \item $\varphi=0$, $\alpha=\angle ac'd$ and $\beta=\angle ac''d$ if $P$ is $2$-special;
 \item $\varphi=\angle c'ae'$, $\alpha=\angle ae'e''$ and $\beta=\angle ae''e'$ if $P$ is $3$-special;
 \item $\varphi=\angle c'ae'$, $\alpha=\angle ae'f'$ and $\beta=\angle ae''f'$ if $P$ is $4$-special.
\end{itemize}
\end{defn}

\begin{figure}[ht!]
\labellist
\small\hair 2pt

\pinlabel $a$ [b] at 82 760
\pinlabel $c'$ [b] at -29 818
\pinlabel $c''$ [b] at 194 818
\pinlabel $d$ [t] at 75 619
\pinlabel $\alpha$ [r] at 17 787
\pinlabel $\beta$ [r] at 175 782
\pinlabel {$2$-special} at 82.5 820

\pinlabel $a$ [b] at 479 741
\pinlabel $c'$ [b] at 364 798
\pinlabel $e'$ [t] at 364 637
\pinlabel $e''$ [t] at 592 637
\pinlabel $c''$ [b] at 592 798
\pinlabel $\alpha$ [l] at 384 649
\pinlabel $\beta$ [r] at 562 653
\pinlabel $\varphi$ [r] at 459 733
\pinlabel {$3$-special} at 478 800

\pinlabel $a$ [b] at 282 455
\pinlabel $c'$ [b] at 171 512
\pinlabel $e'$ [t] at 171 332
\pinlabel $f'$ [t] at 303 290
\pinlabel $e''$ [t] at 392 334
\pinlabel $c''$ [b] at 391 512
\pinlabel $\alpha$ [l] at 190 339
\pinlabel $\beta$ [r] at 373 334
\pinlabel $\varphi$ [r] at 262 444
\pinlabel {$4$-special} at 281 514

\pinlabel $\frac{\pi}{3}$ [tr] at 585 781
\pinlabel $\frac{\pi}{3}$ [tl] at 372 781

\pinlabel $\frac{\pi}{3}$ [tl] at 173 495
\pinlabel $\frac{\pi}{3}$ [tr] at 388 496

\endlabellist
\centering
  \includegraphics[width=0.55\textwidth]{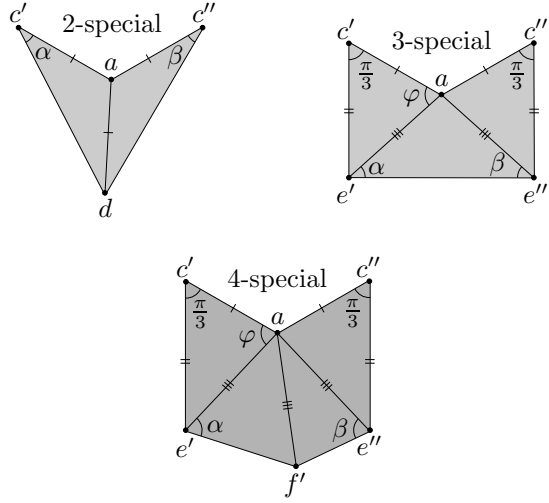}
  \caption{Defining the angles $\varphi$, $\alpha$ and $\beta$}
  \label{fig_defn_phi_alpha_beta}
\end{figure}

Given a special polygon $P$, it is easy to see that $\alpha+\beta=\varphi+\pi/3$ if $P$ is either $2$- or $4$-special, and that $\alpha=\beta=\varphi-\pi/6$ if $P$ is $3$-special.

\begin{defn} \label{ang-param-hex-sph}
Given a hex sphere $M$ with Voronoi  polygons $P_A$ or $P_B$, we define its angles parameters $\varphi(M)$, $\alpha(M)$ and $\beta(M)$ by $\varphi(M)=\varphi(P_A)=\varphi(P_B)$, $\alpha(M)=\alpha(P_A)=\alpha(P_B)$ and $\beta(M)=\beta(P_A)=\beta(P_B)$.
\end{defn}

\section{The Moduli Space of Special Polygons} \label{sect-mod-space-spec-polyg}

\begin{defn} \label{def-MSP}
We define the equivalence relation $\sim$ on the set of all special polygons by  $P_1 \sim P_2$ if and only if there is an orientation-preserving similarity of the plane which sends the vertices of $P_1$ to the vertices of $P_2$. The set of all $\sim$ equivalence classes is called the \emph{moduli space} of special polygons and it is denoted by $\mathcal{MSP}$. The set $\mathcal{MSP}_n \subset \mathcal{MSP}$ is the set of all $\sim$ equivalence classes of $n$-special polygons ($n \in \{2,3,4\}$).
\end{defn}

\begin{notn} From now on, we will use the following notation:
\begin{itemize}
\item The $\sim$ equivalence class of a special polygon $P$ will be denoted by $[P]_{\text{sim}} \in \mathcal{MSP}$. 
\item The only vertex of a special polygon at which the corner angle is $4\pi/3$ will be denoted by $a$.
\item Given a special polygon $P$, let $\widehat{P}$ denote the only \emph{unit-area} polygon that is the image of the polygon $P$ under a homothety of the plane fixing the vertex $a$. 
\item If $P$ is a special polygon, then we write $P=(v_1,v_2, \cdots,v_n)$ to mean that $v_1,v_2, \cdots,v_n$ (in that order) are the vertices of $P$ that we find when we travel along the boundary of $P$ counterclockwisely, starting at the vertex $v_1=a$. 
\end{itemize}
\end{notn}

The proofs of Propositions \ref{prop_ang_par_cond_4-spec}-\ref{prop_ang_par_cond_2-spec} use only elementary Euclidean geometry and therefore they are omitted.

\begin{prop} \label{prop_ang_par_cond_4-spec}
Given a $4$-special polygon $P$, its angle parameters $\varphi$ and $\alpha$ satisfy the following inequalities:
\begin{enumerate}
 \item $0 < \varphi < 2\pi/3$,
 \item $0 < \alpha < \pi/2$,
 \item $\varphi-\pi/6 < \alpha < \varphi+\pi/3$.
\end{enumerate}
Conversely, given two numbers $\varphi$ and $\alpha$ satisfying (1)-(3), there is a $4$-special polygon $P$ with  $\varphi=\varphi(P)$ and $\alpha=\alpha(P)$. If $P_1$ is another $4$-special polygon with $\varphi=\varphi(P_1)$ and $\alpha=\alpha(P_1)$, then $P \sim P_1$.
\end{prop}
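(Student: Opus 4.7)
The plan is to prove both implications using the dissection of a $4$-special polygon $P$ into four triangles via the diagonals $ae'$, $af'$, $ae''$. The equidistance condition $|ae'|=|af'|=|ae''|$ from Definition \ref{defn_spec_polyg_p4}(2) makes the two central triangles $\triangle ae'f'$ and $\triangle af'e''$ isoceles with apex at $a$, while the two corner triangles $\triangle ac'e'$ and $\triangle ac''e''$ carry the prescribed corner angle $\pi/3$ at $c'$ and $c''$.

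For the forward direction, I would first apply SSS to the corner triangles (using $|ac'|=|ac''|$, $|ae'|=|ae''|$, $|c'e'|=|c''e''|$ from Definition \ref{defn_spec_polyg_p3}(5)--(6)) to conclude $\angle c'ae'=\angle c''ae''=\varphi$; the angle sum in $\triangle ac'e'$ then forces $\angle ae'c'=2\pi/3-\varphi$, whose positivity together with $\varphi>0$ is exactly (1). In the central isoceles triangles the base angles are $\alpha$ at $e'$ and $f'$, respectively $\beta$ at $f'$ and $e''$, so the apex angles at $a$ are $\pi-2\alpha$ and $\pi-2\beta$. Summing the four angles at $a$ to $4\pi/3$ yields the relation $\alpha+\beta=\varphi+\pi/3$. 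Non-degeneracy of each central isoceles triangle is equivalent to $\alpha,\beta\in(0,\pi/2)$, and in view of this relation that is exactly the combination of inequalities (2) and (3).

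For the converse, given $\varphi,\alpha$ satisfying (1)--(3), set $\beta:=\varphi+\pi/3-\alpha$, which lies in $(0,\pi/2)$ by (3). I would construct $P$ as follows: place $a$ at the origin, take $|ae'|=|af'|=|ae''|=1$, locate $e',f',e''$ so that $\angle e'af'=\pi-2\alpha$, $\angle f'ae''=\pi-2\beta$ with $f'$ angularly between $e'$ and $e''$, and attach the corner triangles $\triangle ac'e'$ and $\triangle ac''e''$ with angles $(\varphi,\pi/3,2\pi/3-\varphi)$ on the outside of the edges $ae'$ and $ae''$. By construction the total angle at $a$ equals $2\varphi+(\pi-2\alpha)+(\pi-2\beta)=4\pi/3$, and the law of sines in the congruent corner triangles gives $|ac'|=|ac''|=\sin(2\pi/3-\varphi)/\sin(\pi/3)$ and $|c'e'|=|c''e''|=\sin\varphi/\sin(\pi/3)$. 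Every clause of Definitions \ref{defn_spec_polyg_p3} and \ref{defn_spec_polyg_p4} is then built into the construction except the parallelism $c'e'\parallel c''e''$, and the corner-angle bounds at $e',e'',f'$ (each of the form $<\pi$) translate directly into (1)--(3). Uniqueness up to similarity is automatic because $|ae'|$ is the only free length in the construction and is absorbed by similarity scaling.

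I expect the main obstacle to be the parallelism of $c'e'$ and $c''e''$. I would verify it by traversing $\partial P$ counterclockwise starting at $a$ and tracking the direction of each edge by turning through the exterior angle at each vertex. After simplifying the running direction using the interior angles derived above (namely $\pi/3$ at $c',c''$, $\alpha+(2\pi/3-\varphi)$ at $e'$, $\beta+(2\pi/3-\varphi)$ at $e''$, and $\alpha+\beta$ at $f'$) together with the relation $\alpha+\beta=\varphi+\pi/3$, the direction assigned to the edge $e''c''$ collapses to the direction of $c'e'$ rotated by exactly $\pi$, so the two edges are parallel as line segments. Simplicity of the hexagon then follows because all non-$a$ interior angles lie strictly in $(0,\pi)$ under (1)--(3), completing the proof.
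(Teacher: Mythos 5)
The paper gives no proof of this proposition at all (it is explicitly omitted as ``elementary Euclidean geometry''), so there is nothing of the author's to compare against; your fan decomposition of $P$ from the reflex vertex $a$ into the two corner triangles and the two central isosceles triangles is surely the intended argument, and it is correct. I checked the key computations: SSS does apply to $\triangle ac'e'$ and $\triangle ac''e''$ (the three equalities come from clauses (5)--(6) of Definition~\ref{defn_spec_polyg_p3} together with the equidistance clause of Definition~\ref{defn_spec_polyg_p4}); the angle count at $a$ gives $2\varphi+(\pi-2\alpha)+(\pi-2\beta)=4\pi/3$, i.e.\ $\alpha+\beta=\varphi+\pi/3$, matching the relation stated after Definition~\ref{defn_angle-param}; non-degeneracy of the four triangles is exactly (1)--(3); and your turning-angle verification of parallelism is right, since with interior angles $2\pi/3-\varphi+\alpha$ at $e'$, $\varphi+\pi/3$ at $f'$ and $\pi-\alpha$ at $e''$ the direction of $e''c''$ exceeds that of $c'e'$ by $(\pi/3+\varphi-\alpha)+(2\pi/3-\varphi)+\alpha=\pi$. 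Two points deserve to be made explicit. First, the forward direction tacitly assumes that the diagonals $ae'$, $af'$, $ae''$ lie inside $P$ and subdivide the $4\pi/3$ angle at $a$ into the four pieces you add up; since $a$ is the unique reflex vertex of $P$ this is true, but it is the one nontrivial geometric input and should be stated rather than assumed. Second, in the converse, deducing simplicity from ``all non-$a$ angles lie in $(0,\pi)$'' is weaker than what your construction already hands you: the five rays from $a$ occur at strictly positive angular increments $\varphi$, $\pi-2\alpha$, $\pi-2\beta$, $\varphi$ summing to $4\pi/3<2\pi$, so the four triangles occupy pairwise disjoint sectors and their union is automatically an embedded hexagon star-shaped from $a$; citing that makes the simplicity claim airtight. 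With those two sentences added, the proof is complete.
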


\begin{prop} \label{prop_ang_par_cond_3-spec}
Given a $3$-special polygon $P$, its angle parameters $\varphi$ and $\alpha$ satisfy the following inequalities:
\begin{enumerate}
 \item $\pi/6 < \varphi < 2\pi/3$,
 \item $\alpha=\varphi-\pi/6$.
 \end{enumerate}
Conversely, given two numbers $\varphi$ and $\alpha$ satisfying (1)-(2), there is a $3$-special polygon $P$ with  $\varphi=\varphi(P)$ and $\alpha=\alpha(P)$. If $P_1$ is another $3$-special polygon with $\varphi=\varphi(P_1)$ and $\alpha=\alpha(P_1)$, then $P \sim P_1$.
\end{prop}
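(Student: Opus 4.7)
The plan is to reduce everything to the geometry of the single triangle $ac'e'$, whose three interior angles are determined by $\varphi$, and then to exploit the bilateral symmetry that a $3$-special polygon automatically enjoys.

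For the forward direction, consider the triangle $ac'e'$. The corner angle of $P$ at $c'$ equals $\pi/3$ by item (4) of Definition \ref{defn_spec_polyg_p3}, and because the only edges of $P$ meeting at $c'$ are $c'a$ and $c'e'$, this coincides with the interior angle $\angle ac'e'$ of the triangle. The angle at $a$ equals $\varphi$ by definition, so the third angle is $\angle c'e'a = \pi-\pi/3-\varphi = 2\pi/3-\varphi$. Next, the chord $ae'$ lies in the interior of $P$ and the ray $e'a$ separates the two edges of $P$ meeting at $e'$, so the corner angle at $e'$, which equals $\pi/2$, splits as
\[
\pi/2 \;=\; \angle c'e'e'' \;=\; \angle c'e'a + \angle ae'e'' \;=\; (2\pi/3 - \varphi) + \alpha,
\]
giving $\alpha = \varphi - \pi/6$. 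Non-degeneracy of the triangle $ac'e'$ and of the angular decomposition at $e'$ requires both $2\pi/3 - \varphi > 0$ and $\alpha > 0$, so $\pi/6 < \varphi < 2\pi/3$.

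For the converse, given $\varphi\in(\pi/6,2\pi/3)$ and $\alpha = \varphi - \pi/6$, construct $P$ as follows. Put $a$ at the origin, $c' = (-\sqrt{3}/2,1/2)$ and $c'' = (\sqrt{3}/2,1/2)$; then $|ac'|=|ac''|=1$ and the reflex angle at $a$ opening into the lower half-plane equals $4\pi/3$. The triangle with angles $(\varphi, \pi/3, 2\pi/3 - \varphi)$ at $(a,c',e')$ and side $|ac'|=1$ is determined up to congruence; the law of sines gives $|ae'| = \sin(\pi/3)/\sin(2\pi/3 - \varphi)$, and I place $e'$ on the ray from $a$ of polar angle $5\pi/6 + \varphi$ (rotated counterclockwise from ray $ac'$ into the interior). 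Define $e''$ as the reflection of $e'$ across the $y$-axis. Using the identity $\sin(2\pi/3-\varphi) = \sin(\pi/3+\varphi)$, a one-line check shows that the $x$-coordinate of $e'$ equals $-\sqrt{3}/2$, so $c'e'$ is vertical, $c''e''$ is vertical by symmetry, and $e'e''$ is horizontal; hence the corner angles at $e'$ and $e''$ are $\pi/2$, and $|c'e'|=|c''e''|$. All items of Definition \ref{defn_spec_polyg_p3} are thus satisfied, and by construction $\varphi(P)=\varphi$, $\alpha(P)=\alpha$. Uniqueness up to orientation-preserving similarity is automatic: any other $3$-special polygon $P_1$ with the same $\varphi$ has a triangle $ac'e'$ with the same three angles and so similar to the one in $P$; applying the unique orientation-preserving similarity that sends the pair $(a,c')$ of $P$ to the pair $(a,c')$ of $P_1$ carries $e'$ to $e'$, and the bilateral symmetry forced by items (5)--(7) of Definition \ref{defn_spec_polyg_p3} then matches the remaining vertices.

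The main bookkeeping obstacle is justifying the plus sign in the decomposition $\angle c'e'e'' = \angle c'e'a + \angle ae'e''$: one must argue that the chord $ae'$ actually enters the interior of $P$, so that the ray $e'a$ lies strictly between the two edges at $e'$. This is clear because the only reflex vertex of $P$ is $a$, while $e'$ lies in the open sector at $a$ bounded by the rays $ac'$ and $ac''$ on the interior side, a configuration guaranteed precisely by the strict inequalities $\pi/6 < \varphi < 2\pi/3$. Every other step is a routine triangle calculation of the kind the paper explicitly suppresses.
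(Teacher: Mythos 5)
The paper gives no proof to compare against: it states just before Proposition \ref{prop_ang_par_cond_4-spec} that the proofs of Propositions \ref{prop_ang_par_cond_4-spec}--\ref{prop_ang_par_cond_2-spec} ``use only elementary Euclidean geometry and therefore they are omitted,'' so your argument has to be judged on its own. It is correct in substance, and the computations check out: the triangle $ac'e'$ has angles $\varphi$, $\pi/3$, $2\pi/3-\varphi$; the right angle at $e'$ splits as $(2\pi/3-\varphi)+\alpha$; and in your explicit model the identities $\sin(2\pi/3-\varphi)=\cos(\varphi-\pi/6)$ and $\cos(5\pi/6+\varphi)=-\cos(\varphi-\pi/6)$ do put $e'$ at abscissa $-\sqrt{3}/2$, so the constructed pentagon is genuinely $3$-special with the prescribed parameters, and the uniqueness argument is fine. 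The one step whose justification is softer than it should be is exactly the one you flag: knowing that the chord $ae'$ enters the interior of $P$ \emph{near $a$} does not by itself place the ray $e'a$ inside the right-angle wedge at $e'$, since a priori the chord could leave $P$ before reaching $e'$. A clean way to close this without any incidence argument is to note that conditions (6) and (7) of Definition \ref{defn_spec_polyg_p3} force $c'e'e''c''$ to be a rectangle (both sides $c'e'$ and $c''e''$ are perpendicular to $e'e''$, have equal length, and lie on the same side of the line $e'e''$ by simplicity), while condition (5), the reflex angle $4\pi/3$ at $a$, and simplicity force $a$ to lie on the perpendicular bisector of $e'e''$ strictly between the lines $e'e''$ and $c'c''$; then the vector $e'a$ has strictly positive components in the frame spanned by $e'e''$ and $e'c'$, which is precisely the needed angle decomposition and also gives $0<\alpha<\pi/2$, hence $\pi/6<\varphi<2\pi/3$, directly. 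With that patch the whole argument is complete.
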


\begin{prop} \label{prop_ang_par_cond_2-spec}
Given a $2$-special polygon $P$, its angle parameters $\varphi$ and $\alpha$ satisfy the following inequalities:
\begin{enumerate}
 \item $\varphi=0$,
 \item $0<\alpha<\pi/3$.
 \end{enumerate}
Conversely, given two numbers $\varphi$ and $\alpha$ satisfying (1)-(2), there is a $2$-special polygon $P$ with  $\varphi=\varphi(P)$ and $\alpha=\alpha(P)$. If $P_1$ is another $2$-special polygon with $\varphi=\varphi(P_1)$ and $\alpha=\alpha(P_1)$, then $P \sim P_1$.
\end{prop}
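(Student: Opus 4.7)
The plan is to leverage the quadrilateral angle sum together with the equidistance condition (5) of Definition~\ref{defn_spec_polyg_p2}.

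For the forward direction, since $P$ is a $2$-special polygon, it is a quadrilateral and so its four interior angles sum to $2\pi$. By Definition~\ref{defn_spec_polyg_p2}, the corner at $a$ equals $4\pi/3$ and the corner at the opposite vertex $d$ equals $\pi/3$, so $\alpha+\beta=2\pi-4\pi/3-\pi/3=\pi/3$. Non-degeneracy of the polygon forces $\alpha,\beta>0$, giving $0<\alpha<\pi/3$, while the equality $\varphi=0$ holds directly by Definition~\ref{defn_angle-param}.

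For the converse, given $\alpha\in(0,\pi/3)$, set $\beta=\pi/3-\alpha$ and construct a Euclidean triangle $T$ with vertices $c',d,c''$ whose interior angles are $\alpha,\pi/3,\beta$ respectively; such a $T$ exists and is unique up to similarity. Let $a$ be the circumcenter of $T$. Since $\alpha,\beta<\pi/3<\pi/2$, the triangle $T$ is acute, so $a$ lies in the interior of $T$. The inscribed-angle theorem applied to $\angle c'dc''=\pi/3$ gives central angle $\angle c'ac''=2\pi/3$; the reflex angle on the opposite side therefore equals $4\pi/3$. Consequently the vertex sequence $(a,c',d,c'')$ traces a simple planar quadrilateral whose interior angle at $a$ is $4\pi/3$; equidistance of $c',c'',d$ from $a$ is automatic; and the corner angles at $c'$ and $c''$ coincide with those of $T$, because $a$ lies inside $T$ and so the segments $ac'$ and $ac''$ lie inside the angular wedges at those two vertices. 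All clauses of Definition~\ref{defn_spec_polyg_p2} are thereby satisfied, and by construction $\varphi(P)=0$, $\alpha(P)=\alpha$.

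Uniqueness up to similarity is then immediate: any other $2$-special polygon $P_1$ with the same parameter $\alpha$ has a triangle $c'dc''$ with the same three angles $\alpha,\pi/3,\beta$, hence similar to $T$; the equidistance condition pins down $a$ as the circumcenter, so the similarity matching the two triangles extends to a similarity matching the full polygons. The only subtlety to watch for is showing that the quadrilateral we build is simple and that the $4\pi/3$ really sits on the interior side of the angle at $a$; both facts hinge on the circumcenter lying inside the triangle, which is precisely the acuteness guaranteed by $\alpha\in(0,\pi/3)$.
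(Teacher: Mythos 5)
The paper omits the proofs of Propositions \ref{prop_ang_par_cond_4-spec}--\ref{prop_ang_par_cond_2-spec} as elementary, so there is nothing to compare against; judged on its own terms, your forward direction is correct (quadrilateral angle sum plus conditions (2)--(4) of Definition \ref{defn_spec_polyg_p2} give $\alpha+\beta=\pi/3$), but the converse has a genuine error. You construct a triangle $T=c'dc''$ with interior angles $\alpha,\pi/3,\beta$ at $c',d,c''$ where $\beta=\pi/3-\alpha$; these sum to $2\pi/3$, not $\pi$, so no such Euclidean triangle exists. The underlying confusion is your claim that the corner angles of the quadrilateral at $c'$ and $c''$ ``coincide with those of $T$'' because $a$ lies inside the wedges: lying strictly inside the wedge at $c'$ means the segment $ac'$ \emph{splits} the triangle's angle $\angle dc'c''$ into $\angle dc'a + \angle ac'c''$, so the quadrilateral's corner angle $\alpha=\angle ac'd$ is strictly smaller than the triangle's angle at $c'$. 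Quantitatively, since $a$ is the circumcenter and $\angle c'ac''=2\pi/3$, the isosceles triangle $ac'c''$ has base angles $\angle ac'c''=\angle ac''c'=\pi/6$, so the triangle $c'dc''$ must have angles $\alpha+\pi/6$, $\pi/3$, $\beta+\pi/6$ (which do sum to $\pi$).

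The circumcenter strategy itself is sound and is easily repaired: given $\alpha\in(0,\pi/3)$, build the triangle $T$ with angles $\alpha+\pi/6$ at $c'$, $\pi/3$ at $d$, and $\pi/2-\alpha$ at $c''$; all three angles are acute precisely because $0<\alpha<\pi/3$, so the circumcenter $a$ lies in the interior of $T$, the inscribed-angle argument gives the reflex angle $4\pi/3$ at $a$, and subtracting the base angles $\pi/6$ recovers $\angle ac'd=\alpha$ and $\angle ac''d=\beta$. The uniqueness paragraph has the same flaw (the triangle $c'dc''$ of a $2$-special polygon has angles $\alpha+\pi/6,\pi/3,\beta+\pi/6$, not $\alpha,\pi/3,\beta$) but its structure --- the angles of $T$ are determined by $\alpha$, hence $T$ is determined up to similarity, and condition (5) pins $a$ as the circumcenter --- goes through verbatim once the angles are corrected.
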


We know from Section \ref{sec-ang-param-spec-polyg} that given a special polygon $P$ we can construct a hex sphere in the following way. Take the polygon $P$ and a copy, $P_1$, of P. Identify the edges on the boundaries of $P$ and $P_1$ according to the gluing scheme of Figure \ref{fig_spec_polyg_p2}, Figure \ref{fig_spec_polyg_p3} or Figure \ref{fig_spec_polyg_p4} (depending if $P$ is $2$-, $3$- or $4$-special, respectively). Let $Hex(P)$ denote the hex sphere constructed from $P$ in this way, and let $\pi_P \colon P \sqcup P_1 \to Hex(P)$ be the canonical map. Let $a_P \in Hex(P)$ be the image of the vertex $a$ of $P$. Let $A_P$ be the Voronoi cell of the hex sphere $Hex(P)$ centered at the singular point $a_P$. Then the polygon $P$ can be identified with the planar polygon associated to $A_P$.

\begin{defn}
 Let $P$ and $Q$ be two special polygons. A PL homeomorphism $g \colon P \to Q$ is said to \emph{extend} to a map $\bar{g} \colon Hex(P) \to Hex(Q)$ if  $\bar{g} \circ \pi_P \circ i_P=\pi_Q \circ i_Q \circ g$, where $i_P \colon P \hookrightarrow P \sqcup P_1$ and $i_Q \colon Q \hookrightarrow Q \sqcup Q_1$ are the inclusion maps.
\end{defn}

\begin{defn} \label{def_met_MSP}
Given two special polygons $P$ and $Q$, we define the number
$d_{\mathcal{SP}}(P,Q)=\log \inf K$. The infimum is taken over all numbers $K \geq 1$ such that there is an orientation-preserving PL $K$-bi-Lipschitz homeomorphism $g \colon \widehat{P} \to \widehat{Q}$, which  extends to an orientation-preserving PL homeomorphism  $\bar{g} \colon Hex(\widehat{P}) \to Hex(\widehat{Q})$ that preserves Voronoi graphs.
\end{defn}

\begin{prop} \label{prop_met_MSP}
The map $\textbf{d}_{\mathcal{MSP}}$ of $\mathcal{MSP} \times \mathcal{MSP}$ into the non-negative reals defined by
$\textbf{d}_{\mathcal{MSP}}([P]_{\text{sim}},[Q]_{\text{sim}})=d_{\mathcal{SP}}(P,Q)$
is a metric on $\mathcal{MSP}$. Let $Cl(\cdot)$ denote the closure of $\cdot$ in the metric space $(\mathcal{MSP},\textbf{d}_{\mathcal{MSP}})$. Then $Cl(\mathcal{MSP}_4)=\mathcal{MSP}$, and $\mathcal{MSP}_i \cap Cl(\mathcal{MSP}_j)=\emptyset$  for $i,j \in \{2,3\}$, $i \neq j$.
\end{prop}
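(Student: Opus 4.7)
The plan is to verify the metric axioms first, then address the two topological claims about strata separately. For well-definedness, if $P \sim P'$ via an orientation-preserving similarity $\sigma$, then $\sigma$ must carry the distinguished $4\pi/3$-vertex $a$ of $P$ to its analogue, so the induced map between the unit-area representatives $\widehat{P}$ and $\widehat{P'}$ is an area-preserving similarity fixing $a$, hence an isometry; bi-Lipschitz constants are invariant under pre- and post-composition with isometries, so the infimum $\inf K$ is unchanged. Non-negativity is immediate from $K \ge 1$, while symmetry and the triangle inequality will follow from inverting and composing admissible PL homeomorphisms: the inverse of a $K$-bi-Lipschitz map is $K$-bi-Lipschitz and the composition of $K_1$- and $K_2$-bi-Lipschitz maps is $K_1 K_2$-bi-Lipschitz, with both operations preserving the extension to hex spheres and the preservation of Voronoi graphs. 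For the separating axiom I will choose a sequence $g_n \colon \widehat{P} \to \widehat{Q}$ with distortion $K_n \to 1$ and apply Arzel\`a--Ascoli on the compact polygon $\widehat{P}$ to extract a subsequential uniform limit $g$ that is $1$-bi-Lipschitz, hence an isometry between unit-area polygons; since $g$ sends $a$ to $a'$, it comes from a planar similarity of multiplier $1$, proving $P \sim Q$.

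For the closure claim $Cl(\mathcal{MSP}_4) = \mathcal{MSP}$, I only have to approximate $2$- and $3$-special classes by $4$-special ones, and here the guidance is Remark \ref{rmk_gen_spec_polyg}. Given a $3$-special $P$ with parameters $(\varphi,\varphi-\pi/6)$, I will take a sequence of $4$-special polygons $P_n$ with parameters $(\varphi,\alpha_n)$, $\alpha_n \downarrow \varphi-\pi/6$; these exist by Proposition \ref{prop_ang_par_cond_4-spec}. As $\alpha_n$ approaches the boundary of the $4$-special inequality, the sixth vertex $f'_n$ of $\widehat{P_n}$ merges with an adjacent vertex, so the polygons $\widehat{P_n}$ converge in the plane to $\widehat{P}$. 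I will build explicit PL bi-Lipschitz homeomorphisms $g_n \colon \widehat{P_n} \to \widehat{P}$ using compatible triangulations in which one thin triangle near $f'_n$ is collapsed onto an edge, and a direct calculation will show $K_n \to 1$. Because the $4$-special gluing of Figure \ref{fig_spec_polyg_p4} degenerates edge-to-edge onto the $3$-special gluing of Figure \ref{fig_spec_polyg_p3} under this vertex merger, the $g_n$ extend to orientation-preserving PL homeomorphisms $\bar{g}_n$ of hex spheres carrying Voronoi graph onto Voronoi graph, so $\mathbf{d}_{\mathcal{MSP}}([P_n],[P]) \to 0$. The $2$-special case will be handled analogously, with $\varphi_n \downarrow 0$ and the parallel pair of sides $c'_n e'_n$, $c''_n e''_n$ shrinking to points.

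For the disjointness assertion, my approach is to show that the angle parameter $\varphi$ is continuous on $(\mathcal{MSP}, \mathbf{d}_{\mathcal{MSP}})$. The cleanest route is to rerun the Arzel\`a--Ascoli argument above on any convergent sequence $[P_n] \to [P]$: distortions tending to $1$ force a subsequential isometric limit of the $\widehat{P_n}$, and isometric convergence forces convergence of the corner angle at $a$, giving $\varphi(P_n) \to \varphi(P)$. Propositions \ref{prop_ang_par_cond_2-spec} and \ref{prop_ang_par_cond_3-spec} then yield $\varphi=0$ throughout $\mathcal{MSP}_2$ and $\varphi > \pi/6$ throughout $\mathcal{MSP}_3$, so these two strata are strictly separated in terms of $\varphi$ and neither can contain a limit of the other. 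The main technical obstacle I anticipate is the explicit construction in the closure step: producing $\bar{g}_n$ as a global PL homeomorphism while correctly matching the Voronoi graph combinatorics across the boundary between the $4$-special stratum and the lower-dimensional strata will require a careful, case-by-case matching of the gluing schemes of \cite{vor-graphs} at their degeneracies.
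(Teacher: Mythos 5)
The paper gives no argument for this proposition beyond the sentence ``follows from standard metric arguments,'' so there is nothing in the text to compare your route against; your proposal is best read as a filling-in of exactly those standard arguments, and its skeleton is correct. The verification of the metric axioms (well-definedness by composing with the isometry $\widehat{P}\to\widehat{P'}$, symmetry and the triangle inequality from inverses and compositions of admissible bi-Lipschitz maps, separation from an Arzel\`a--Ascoli limit that is $1$-bi-Lipschitz between unit-area polygons and hence the restriction of a planar isometry), the density of $\mathcal{MSP}_4$ via the degenerations of Remark \ref{rmk_gen_spec_polyg}, and your identification of the real technical crux --- making the approximating maps extend across the change of gluing scheme while preserving Voronoi graphs --- are all in order. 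One caveat on the Arzel\`a--Ascoli steps: the domains $\widehat{P}_n$ vary with $n$, so you should normalize all polygons (say with $a$ at the origin and using that the distortions are bounded, so everything sits in a fixed compact ball) and extract a limit in the Hausdorff sense, rather than applying Arzel\`a--Ascoli verbatim.

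Two local statements need repair. First, in the closure step you propose triangulations in which ``one thin triangle near $f'_n$ is collapsed onto an edge''; taken literally this destroys injectivity and admits no finite bi-Lipschitz constant. What you want is to send $f'_n$ to a boundary point of $\widehat{P}$ at distance comparable to $|e'_nf'_n|$ from the merged vertex, so the thin triangle maps to a nearly congruent \emph{nondegenerate} thin triangle; then $K_n\to 1$ as intended. Second, in the disjointness step you justify $\varphi(P_n)\to\varphi(P)$ by ``convergence of the corner angle at $a$''; but the corner angle at $a$ is $4\pi/3$ for \emph{every} special polygon, so that sentence proves nothing --- $\varphi$ is the angle $\angle c'ae'$ subtended at $a$ by two particular vertices, not the corner angle. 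The fix is to argue that Hausdorff convergence of the normalized polygons, together with the fact that $a$ is the unique $4\pi/3$-corner, forces convergence of the labelled vertex configuration and hence of $\varphi$ (with a merger $e'\to c'$ yielding $\varphi\to 0$, consistent with the $2$-special convention $\varphi=0$). Alternatively, disjointness follows even more directly from your limit argument: unit-area quadrilaterals cannot Hausdorff-converge to a nondegenerate pentagon, and collapsing any edge of a $3$-special pentagon (whose corner angles away from $a$ are fixed at $\pi/3,\pi/2,\pi/2,\pi/3$) would create a limit corner of angle $\le 0$, so neither of $\mathcal{MSP}_2$, $\mathcal{MSP}_3$ accumulates on the other.
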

The proof of the Proposition \ref{prop_met_MSP} follows from standard metric arguments.

\begin{defn} \label{def-set-Y} Let $\mathcal{Y}=\{(\varphi,\alpha) \in [0,2\pi/3) \times (0,\pi/2] \colon \varphi-\pi/6 \leq \alpha < \varphi+\pi/3\}$. This set becomes a metric space with the restriction of the Euclidean metric on the plane to $\mathcal{Y}$. 
\end{defn}
The reader can check that the map $\Lambda \colon \mathcal{MSP} \to \mathcal{Y}$ defined by \begin{equation} \label{defn-Lambda}
 \Lambda([P]_{\text{sim}})=(\varphi(P),\alpha(P)), \, [P]_{\text{sim}} \in \mathcal{MSP}.
\end{equation}
is well-defined. However, this map is not continuous (see Example \ref{ex-prop-justif-defn-Z}).

\begin{examp} \label{ex-prop-justif-defn-Z} 
Consider the unit-area, $4$-special polygons $P_1$ and $P_2$ from Figure \ref{fig-justif-defn-Z_1}. Suppose that $d_{\mathbb{E}^2}(a_1,c_1')=d_{\mathbb{E}^2}(a_2,c_2')$ and that $d_{\mathbb{E}^2}(c_1',e_1')=d_{\mathbb{E}^2}(c_2',e_2')$ (where $d_{\mathbb{E}^2}$ denotes the Euclidean distance on the plane). Let $PENT=(a_1,c_1',e_1',e_1'',c_1'')$ (see Figure \ref{fig-justif-defn-Z_1}). Then clearly:
\begin{enumerate}
\item  $\lim_{f_1 \to e_1'}([P_1]_{\text{sim}})=[PENT]_{\text{sim}}$=$\lim_{f_2 \to e_2''}([P_2]_{\text{sim}})$. [Here $f_1 \to e_1'$ (respectively, $f_2 \to e_2''$) means that $f_1$ (respectively, $f_2$) approaches $e_1'$(respectively, $e_2''$) keeping all vertices of $P_1$(respectively, $P_2$) but $f_1$(respectively, $f_2$) fixed.]
\item $\lim_{f_1 \to e_1'}(\Lambda(P_1))=(\varphi_1,\pi/2)$, but $\lim_{f_2 \to e_2''}(\Lambda(P_2))=(\varphi_1,\varphi_1-\pi/6)$.
\end{enumerate}

\vspace{12pt} 
\begin{figure}[ht!]
\labellist
\small\hair 2pt

\pinlabel $\frac{\pi}{3}$ [tl] at 5 693
\pinlabel $\frac{\pi}{3}$ [tr] at 223 693

\pinlabel $\frac{\pi}{3}$ [tl] at 374 693
\pinlabel $\frac{\pi}{3}$ [tr] at 592 693

\pinlabel $a_1$ [b] at 114 650
\pinlabel $c_1'$ [b] at 3 708
\pinlabel $c_1''$ [b] at 224 708
\pinlabel $e_1'$ [r] at 1 527 
\pinlabel $e_1''$ [l] at 226 527
\pinlabel $f_1$ [t] at 19 514

\pinlabel $a_2$ [b] at 483 650
\pinlabel $c_2'$ [b] at 372 708
\pinlabel $c_2''$ [b] at 593 708
\pinlabel $e_2'$ [r] at 372 527 
\pinlabel $e_2''$ [l] at 595 527
\pinlabel $f_2$ [t] at 577 514

\pinlabel $\varphi_1$ [r] at 102 641
\pinlabel $\alpha_1$ [l] at 18 532
\pinlabel $\varphi_2$ [r] at 471 641
\pinlabel $\alpha_2$ [l] at 388 537

\pinlabel $P_1$ at 114 575 
\pinlabel $P_2$ at 483 575 
\endlabellist
\centering
  \includegraphics[width=0.5\textwidth]{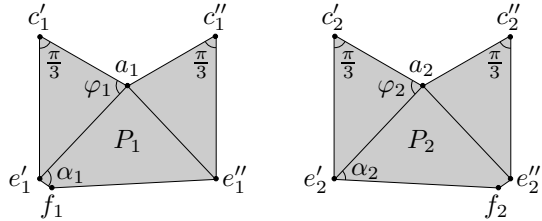}
  \caption{The special polygons $P_1$ and $P_2$}
  \label{fig-justif-defn-Z_1}
\end{figure}
\end{examp}
Example \ref{ex-prop-justif-defn-Z} motivates the following definition.
\begin{defn} \label{def-set-Z}
Let $\mathcal{Z}$ be the set obtained from $\mathcal{Y}$ by identifying the points $(\varphi,\pi/2) \in \mathcal{Y}$ and $(\varphi,\varphi-\pi/6) \in \mathcal{Y}$ for $\pi/6<\varphi<2\pi/3$. The set $\mathcal{Z}$ inherits a metric from $\mathcal{Y}$ and, with this metric, $\mathcal{Z}$ becomes a metric surface. The canonical projection of $ \mathcal{Y}$ onto $ \mathcal{Z}$ will be denoted by $\Pi \colon \mathcal{Y} \to \mathcal{Z}$.
\end{defn}

\begin{thm} \label{thm_mod_spac_polyg} 
Let $\Lambda \colon \mathcal{MSP} \to \mathcal{Y}$ be the map defined in \eqref{defn-Lambda}. Then the composition  $\Pi \circ \Lambda \colon \mathcal{MSP} \to \mathcal{Z}$  is a homeomorphism. 
\end{thm}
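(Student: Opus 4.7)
The plan is to exhibit $\Pi \circ \Lambda$ as a continuous bijection between the metric spaces $(\mathcal{MSP},\textbf{d}_{\mathcal{MSP}})$ and $(\mathcal{Z},\textbf{d}_{\mathcal{Z}})$ with continuous inverse, splitting the work into bijectivity, continuity of $\Pi\circ\Lambda$, and continuity of its inverse.

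First, $\Lambda$ is well-defined on $\mathcal{MSP}$ because orientation-preserving similarities of the Euclidean plane preserve corner angles, so $\varphi(P)$ and $\alpha(P)$ depend only on $[P]_{\text{sim}}$. For bijectivity, Propositions \ref{prop_ang_par_cond_4-spec}--\ref{prop_ang_par_cond_2-spec} identify $\Lambda(\mathcal{MSP}_2)$ with the left edge $\{\varphi=0,\,0<\alpha<\pi/3\}$ of $\mathcal{Y}$, $\Lambda(\mathcal{MSP}_3)$ with the curve $\{\alpha=\varphi-\pi/6,\,\pi/6<\varphi<2\pi/3\}$, and $\Lambda(\mathcal{MSP}_4)$ with the open region $\{0<\varphi<2\pi/3,\ \varphi-\pi/6<\alpha<\min(\pi/2,\varphi+\pi/3)\}$. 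These three pieces are pairwise disjoint, each in bijection with its preimage by the uniqueness-up-to-$\sim$ clause of the same propositions, and their union is exactly $\mathcal{Y}\setminus\{(\varphi,\pi/2):\pi/6<\varphi<2\pi/3\}$. Since $\Pi$ identifies every missing upper-boundary point $(\varphi,\pi/2)$ with the $3$-special point $(\varphi,\varphi-\pi/6)$, the composition $\Pi\circ\Lambda$ is a bijection onto $\mathcal{Z}$.

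For continuity of $\Pi\circ\Lambda$, assume $[P_n]_{\text{sim}}\to[P]_{\text{sim}}$ in $(\mathcal{MSP},\textbf{d}_{\mathcal{MSP}})$, witnessed by PL $K_n$-bi-Lipschitz homeomorphisms $\widehat{P}_n\to\widehat{P}$ extending across the hex spheres while preserving Voronoi graphs, with $K_n\to 1$. Such maps carry vertex sets bijectively onto vertex sets, so after choosing suitable similarity representatives the vertex coordinates of $\widehat{P}_n$ converge in the plane to those of $\widehat{P}$, forcing $\varphi$ and $\alpha$ to converge as continuous functions of those coordinates. When $P$ is $4$-special the inequalities of Proposition \ref{prop_ang_par_cond_4-spec} are open, so $P_n$ is eventually $4$-special and convergence in $\mathcal{Y}$ descends to $\mathcal{Z}$. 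When $P$ is $3$- or $2$-special with $P_n$ still $4$-special, the vertex convergence corresponds to one of the two collapses of Remark \ref{rmk_gen_spec_polyg}; Example \ref{ex-prop-justif-defn-Z} shows that these two collapse modes produce exactly the pair of $\mathcal{Y}$-limits identified by $\Pi$, so in every case $\Pi(\Lambda([P_n]_{\text{sim}}))\to\Pi(\Lambda([P]_{\text{sim}}))$ in $\mathcal{Z}$.

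For the inverse, given $(\varphi,\alpha)\in\mathcal{Y}$ the constructive halves of Propositions \ref{prop_ang_par_cond_4-spec}--\ref{prop_ang_par_cond_2-spec} yield a unit-area representative $\widehat{P}(\varphi,\alpha)$ whose vertex coordinates depend continuously (indeed, piecewise-linearly) on $(\varphi,\alpha)$ within each of the three pieces of $\mathcal{Y}$; at each pair of $\mathcal{Z}$-identified points both lifts yield polygons similar to the same $3$-special one. Affine maps between corresponding vertex sets give PL homeomorphisms whose bi-Lipschitz constants tend to $1$ as the parameters converge, and the gluing patterns of Theorems \ref{thm_spec_polyg_p2}--\ref{thm_spec_polyg_p4} extend these across the hex spheres while preserving Voronoi graphs, giving continuity of $(\Pi\circ\Lambda)^{-1}$. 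The main obstacle will be this last step across the identified edge of $\mathcal{Z}$: one must verify that a $4$-special polygon with $f'$ near $e'$ (so $\alpha$ near $\pi/2$) is bi-Lipschitz close, with constant tending to $1$, to a $4$-special polygon with $f'$ near $e''$ (so $\alpha$ near $\varphi-\pi/6$), whenever their limiting $3$-special polygons coincide — and that no further identifications are needed. This is the geometric content of Example \ref{ex-prop-justif-defn-Z} and confirms that the identification in Definition \ref{def-set-Z} is exactly the right one.
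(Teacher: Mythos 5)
Your proposal is correct and follows essentially the same route as the paper: bijectivity via Propositions \ref{prop_ang_par_cond_4-spec}--\ref{prop_ang_par_cond_2-spec} (identifying $\Lambda(\mathcal{MSP})$ with $\mathcal{Y}\setminus\{(\varphi,\pi/2)\}$ and composing with $\Pi$), continuity handled stratum-by-stratum with the degenerations controlled by Remark \ref{rmk_gen_spec_polyg} and Example \ref{ex-prop-justif-defn-Z}, and the inverse obtained from the constructive map $(\varphi,\alpha)\mapsto[P]_{\text{sim}}$ descending through $\Pi$. Your treatment of the identified edge of $\mathcal{Z}$ is, if anything, slightly more explicit than the paper's.
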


\begin{proof} The proof is divided  into three steps. 
 
\textbf{$\diamond$ Step I:} \emph{The map $\Pi \circ \Lambda \colon \mathcal{MSP} \to \mathcal{Z}$ is  bijective}.

Let $\widetilde{\mathcal{Y}}=\{(\varphi,\alpha) \in \mathcal{Y} \colon \alpha \neq \pi/2\}$.  By Propositions \ref{prop_ang_par_cond_4-spec}, \ref{prop_ang_par_cond_3-spec} and \ref{prop_ang_par_cond_2-spec}, $\widetilde{\mathcal{Y}}=\Lambda(\mathcal{MSP})$. Clearly, the restriction of the map $\Pi$ to $\widetilde{\mathcal{Y}}$ is a bijection of $\widetilde{\mathcal{Y}}$ onto $\mathcal{Z}$. Thus, $\Pi \circ \Lambda(\mathcal{MSP})=\Pi(\widetilde{\mathcal{Y}})=\mathcal{Z}$. Since the restriction of $\Pi$ to $\widetilde{\mathcal{Y}}=\Lambda(\mathcal{MSP})$ is injective, then it suffices to show that $\Lambda \colon \mathcal{MSP} \to \mathcal{Y}$ is injective. This can be easily done by using Propositions \ref{prop_ang_par_cond_4-spec}, \ref{prop_ang_par_cond_3-spec} and \ref{prop_ang_par_cond_2-spec}.   

\textbf{$\diamond$ Step II:} \emph{The map $\Pi \circ \Lambda \colon \mathcal{MSP} \to \mathcal{Z}$ is continuous}.

For each $n$, let $\Lambda_n \colon \mathcal{MSP}_n \to \mathcal{Y}$ be the restriction of $\Lambda$ to $\mathcal{MSP}_n$. Since the map $\Lambda_n$ is continuous, then  so is  $\Pi \circ \Lambda_n \colon \mathcal{MSP}_n \to \mathcal{Z}$. Hence, $\Pi \circ \Lambda \colon \mathcal{MSP} \to \mathcal{Z}$ is also continuous (by Remark \ref{rmk_gen_spec_polyg} and Proposition \ref{prop_met_MSP}).

\textbf{$\diamond$ Step III:} \emph{The inverse map of  $\Pi \circ \Lambda \colon \mathcal{MSP} \to \mathcal{Z}$ is also continuous}.

Consider the map $\Phi \colon \mathcal{Y} \to \mathcal{MSP}$ defined by $\Phi(\varphi,\alpha)=[P]_{\text{sim}}$, where $P$ is a special polygon with angle parameters $(\varphi,\alpha)$. The reader can check that this map is well-defined. The map $\Phi$ is continuous because two (unit-area) special polygons are close in the bi-Lipschitz distance provided that their angle parameters are sufficiently close in the space $\mathcal{Y}$. Also, by Example \ref{ex-prop-justif-defn-Z}, $\Phi(\varphi,\pi/2)=\Phi(\varphi,\varphi-\pi/6)$, and so $\Phi \colon \mathcal{Y} \to \mathcal{MSP}$ descends to a continuous map $\widetilde{\Phi} \colon \mathcal{Z} \to \mathcal{MSP}$ such that $\widetilde{\Phi} \circ \Pi=\Phi$. Clearly, $\widetilde{\Phi} \colon \mathcal{Z} \to \mathcal{MSP}$ is the inverse map of  $\Pi \circ \Lambda \colon \mathcal{MSP} \to \mathcal{Z}$.
\end{proof}

\begin{rmk} \label{rmk-decomp-polyh-Z}
By Theorem \ref{thm_mod_spac_polyg}, we can decompose the surface $\mathcal{Z}$ as   $\mathcal{Z}= \mathcal{Z}_2 \sqcup \mathcal{Z}_3 \sqcup \mathcal{Z}_4$, where $\mathcal{Z}_2=\Pi(\{(\varphi,\alpha) \in \mathcal{Y} \colon \varphi=0\})$, $\mathcal{Z}_3=\Pi(\{(\varphi,\alpha) \in \mathcal{Y} \colon \alpha=\varphi-\pi/6\}$ and  $\mathcal{Z}_4=\Pi(\{(\varphi,\alpha) \in \mathcal{Y} \colon \varphi-\pi/6 < \alpha < \varphi+\pi/3\})$. Notice that $\mathcal{Z}_n$ parametrizes $\mathcal{MSP}_n$, i.e., $\mathcal{Z}_n=\Pi \circ \Lambda(\mathcal{MSP}_n)$ for each $n=2,3,4$. 
\end{rmk}

\section{The Moduli Space of Hex Structures of Unit Area}

\begin{defn} \label{def-mod-sp-hex-str-unit-area}
 Given a topological hex surface $S$, let $\mathcal{M}_{\text{unit}}(S)$ denote the set of all hex structures marked by $S$ that have unit area.
\end{defn}
The set $\mathcal{M}_{\text{unit}}(S)$ is a subspace of $\mathcal{M}(S)$ with the restriction of the metric $\textbf{d}_{\mathcal{M}(S)}$ to $\mathcal{M}_{\text{unit}}(S)$. 

\begin{defn} \label{defn_perm_ind_con_pts}
Let $S$ be a topological hex sphere  with cone points $x_1$, $x_2$, $x_3$ and $x_4$. From now on, we will use the following notation:
\begin{itemize}
\item $\{i_a,i_b,i_c,i_d\}$ will be the unique permutation of the set $\{1,2,3,4\}$ that satisfies the following properties:
\begin{itemize}
 \item $x_{i_a}$ and $x_{i_b}$ are the cone points of $S$ of angle $4\pi/3$.
 \item $x_{i_c}$ and $x_{i_d}$ are the cone points of $S$ of angle $2\pi/3$.
 \item $i_a \leq i_b$ and $i_c \leq i_d$.
\end{itemize}
\item Given a hex structure $(M,h)$ marked by $S$, we define $a=h(x_{i_a})$, $b=h(x_{i_b})$, $c=h(x_{i_c})$ and $d=h(x_{i_d})$.
\end{itemize}

\end{defn}

The definition below gives us a way to split $\mathcal{M}_{\text{unit}}(S)$ into ``halves''.

\begin{defn} \label{def-split-mod-sp-hex-str-unit-area}
 Given a topological hex surface $S$, define
\begin{itemize}
 \item $\mathcal{M}^{+}_{\text{unit}}(S)=\{[M,h]_{\simeq_\mathcal{M}} \in \mathcal{M}_{\text{unit}}(S) \colon d_M(a,c) \leq d_M(b,c)\}$;
 \item $\mathcal{M}^{-}_{\text{unit}}(S)=\{[M,h]_{\simeq_\mathcal{M}} \in \mathcal{M}_{\text{unit}}(S) \colon d_M(b,c) \leq d_M(a,c)\}$.
\end{itemize}
\end{defn}

The spaces $\mathcal{M}^{+}_{\text{unit}}(S)$ and $\mathcal{M}^{-}_{\text{unit}}(S)$ have some nice properties.
\begin{prop} \label{prop-split-mod-sp-hex-str-unit-area}  Given a topological hex surface $S$, the following hold:
\begin{enumerate}
 \item The sets $\mathcal{M}^{+}_{\text{unit}}(S)$ and $\mathcal{M}^{-}_{\text{unit}}(S)$ are closed in $\mathcal{M}_{\text{unit}}(S)$.
 \item The space $\mathcal{M}_{\text{unit}}(S)$ is the union of $\mathcal{M}^{+}_{\text{unit}}(S)$ and $\mathcal{M}^{-}_{\text{unit}}(S)$.
 \item A point $[M,h]_{\simeq_\mathcal{M}} \in \mathcal{M}_{\text{unit}}(S)$ is in $\mathcal{M}^{+}_{\text{unit}}(S) \cap \mathcal{M}^{-}_{\text{unit}}(S)$ if and only if both Voronoi cells $A$ and $B$ of $M$ have exactly $2$ edges.
\end{enumerate}
\end{prop}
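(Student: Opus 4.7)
My plan is to establish the three parts separately, since they are essentially independent. Part (2) is immediate: for any hex structure $[M,h]_{\simeq_\mathcal{M}} \in \mathcal{M}_{\text{unit}}(S)$, the real numbers $d_M(a,c)$ and $d_M(b,c)$ must satisfy either $d_M(a,c) \leq d_M(b,c)$ or $d_M(b,c) \leq d_M(a,c)$, placing $[M,h]_{\simeq_\mathcal{M}}$ in at least one of $\mathcal{M}^+_{\text{unit}}(S)$ or $\mathcal{M}^-_{\text{unit}}(S)$.

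For (1), the key point is that the assignment $[M,h]_{\simeq_\mathcal{M}} \mapsto d_M(a,c) - d_M(b,c)$ is continuous on $\mathcal{M}_{\text{unit}}(S)$. If $[M,h]_{\simeq_\mathcal{M}}$ and $[M',h']_{\simeq_\mathcal{M}}$ are within bi-Lipschitz distance $\log K$, then a witnessing PL $K$-bi-Lipschitz homeomorphism $g \colon M \to M'$ sends $a,b,c$ to $a',b',c'$, and the bi-Lipschitz condition gives $K^{-1} d_M(a,c) \leq d_{M'}(a',c') \leq K d_M(a,c)$ (and similarly for the $bc$-distance). Taking $K \to 1$ yields continuity, so $\mathcal{M}^+_{\text{unit}}(S)$ and $\mathcal{M}^-_{\text{unit}}(S)$, being preimages of closed half-lines, are closed.

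For (3), I first observe that a hex structure lies in $\mathcal{M}^+_{\text{unit}}(S) \cap \mathcal{M}^-_{\text{unit}}(S)$ precisely when $d_M(a,c) = d_M(b,c)$, which is equivalent to $c$ belonging to both $\overline{Vor(a)}$ and $\overline{Vor(b)}$, i.e.\ to the Voronoi graph of $M$ (if $c$ lay in only one closure, continuity of distance would force strict inequality). I then perform a case analysis using Theorems \ref{thm_spec_polyg_p2}, \ref{thm_spec_polyg_p3}, and \ref{thm_spec_polyg_p4}. When $p = 3$ or $p = 4$, the gluing patterns in Figures \ref{fig_spec_polyg_p3} and \ref{fig_spec_polyg_p4} show that $c$ is formed only from vertices $c', c''$ of $P_A$ (while $d$ is formed only from vertices $d', d''$ of $P_B$, or conversely, depending on labeling), so $c$ lies on $\partial A$ but not on $\partial B$, and is strictly closer to $a$ than to $b$. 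When $p = 2$, the gluing pattern in Figure \ref{fig_spec_polyg_p2} identifies a pair of $P_A$-vertices together with a $P_B$-vertex to form $c$ (and symmetrically for $d$), so $c$ is a vertex of both Voronoi cells and thus lies on the Voronoi graph, giving $d_M(a,c)=d_M(b,c)$.

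The main obstacle is the bookkeeping in part (3): one must trace through Figures \ref{fig_spec_polyg_p2}--\ref{fig_spec_polyg_p4} to confirm exactly which vertices of $P_A \sqcup P_B$ descend to each of the cone points of $M$, and verify that only in the $p=2$ case does $c$ receive contributions from both polygons. The angle identity $\alpha+\beta = \pi/3$ for $2$-special polygons (together with the cone angle $2\pi/3$ at $c$) provides a useful sanity check that the three vertex contributions around $c$ match the cone angle correctly. The other steps are standard.
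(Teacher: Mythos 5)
Parts (1) and (2) of your proposal are fine: the paper dismisses both as obvious, and your continuity argument for (1) (the map $[M,h]_{\simeq_\mathcal{M}} \mapsto d_M(a,c)-d_M(b,c)$ is continuous for the bi-Lipschitz metric, so the two subsets are preimages of closed half-lines) correctly supplies the omitted details. For part (3) you take a genuinely different route: the paper argues through the Voronoi graph $\Gamma$, citing Lemma 5.3 of \cite{vor-graphs} (equality $d_M(a,c)=d_M(b,c)$ forces the vertex $c$ of $\Gamma$ to have degree at least $2$) against Lemmas 6.4 and 6.5 (for $p\in\{3,4\}$ that vertex has degree $1$), and cites the proof of Lemma 6.2 for the converse; you instead read everything off the gluing patterns of Figures \ref{fig_spec_polyg_p2}--\ref{fig_spec_polyg_p4}. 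Your converse direction is sound: when $p=2$ the cone point $c$ is the image of vertices of both $P_A$ and $P_B$, hence lies in $\overline{Vor(a)}\cap\overline{Vor(b)}$, and continuity then gives both non-strict inequalities, so $d_M(a,c)=d_M(b,c)$.

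The gap is in the forward direction, at the step where you pass from ``$c$ lies on $\partial A$ but not on $\partial B$'' to ``$c$ is strictly closer to $a$ than to $b$.'' Continuity of the distance functions only yields implications from membership in a closure to a \emph{non-strict} inequality (e.g.\ $c\in\overline{Vor(a)}\Rightarrow d_M(a,c)\le d_M(b,c)$); it does not show that $c\notin\overline{Vor(b)}$ forces $d_M(a,c)<d_M(b,c)$. That implication is equivalent to the identity $\overline{Vor(b)}=\{x\in M\colon d_M(b,x)\le d_M(a,x)\}$, whose nontrivial inclusion requires a cut-locus argument (every interior point of a minimizing geodesic from $b$ to such an $x$ lies in the open region $Vor(b)$, so $x$ is a limit of such points) --- precisely the kind of content the paper outsources to the cited lemmas of \cite{vor-graphs}. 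Your parenthetical ``continuity of distance would force strict inequality'' asserts exactly the implication that needs proof. A related slip: $d_M(a,c)=d_M(b,c)$ is \emph{not} equivalent to $c$ lying on the Voronoi graph, since the cone points $c$ and $d$ are excluded from both open Voronoi regions by Definition \ref{defn-vor-reg} and hence always lie on $\Gamma$; the correct invariant is the degree of $c$ in $\Gamma$ (or, in your language, membership in $\overline{Vor(b)}$, once the identity above is established). With that lemma cited or proved, your figure-based case analysis does go through.
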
 

\begin{proof}
The statements (1) and (2) are obvious. We now prove (3). Suppose that $[M,h]_{\simeq_\mathcal{M}} \in \mathcal{M}^{+}_{\text{unit}}(S) \cap \mathcal{M}^{-}_{\text{unit}}(S)$, so that $d_M(a,c)=d_M(b,c)$. This means that there are at least two shortest geodesics from $c$ to the set $\{a,b\}$. Hence, the degree of the vertex $c$ of the graph $\Gamma$, the Voronoi graph of $M$, is at least $2$ (by Lemma 5.3 %\ref{anal_cut_loc}
from \cite{vor-graphs}). Similarly, the degree of the vertex $d$ of the graph $\Gamma$ is also at least $2$, since  $d_M(a,c)=d_M(b,c)$ implies that  $d_M(b,d)=d_M(a,d)$ (by Theorem 4.4 %\ref{holonomy_argument}
from \cite{vor-graphs}).

Let $p$ be the number of edges of the Voronoi cell $A$ of the hex sphere $M$. By Theorem 1.2 and Observation 6.1 from \cite{vor-graphs}, $p$ is also the number of edges of the Voronoi cell $B$ and it can only be equal to $2$, $3$ or $4$. If $p$ were equal to either $3$ or $4$, then the vertices $c$ and $d$ of the Voronoi graph $\Gamma$ would have degree $1$ (by Lemmas 6.4 and 6.5 from \cite{vor-graphs}). But this contradicts what we just proved in the previous paragraph. Therefore, $p=2$.

Conversely, suppose that $[M,h]_{\simeq_\mathcal{M}} \in \mathcal{M}_{\text{unit}}(S)$ and that 
both Voronoi cells $A$ and $B$ of $M$ have exactly $2$ edges. This implies that $d_M(a,c)=d_M(b,c)$ (by the proof of Lemma 6.2 from \cite{vor-graphs}). Therefore, $[M,h]_{\simeq_\mathcal{M}} \in \mathcal{M}^{+}_{\text{unit}}(S) \cap \mathcal{M}^{-}_{\text{unit}}(S)$.
\end{proof}

\begin{defn} \label{def-set-Z-hat}
Consider two copies $\mathcal{Z}^{+}$ and $\mathcal{Z}^{-}$ of the surface $\mathcal{Z}$ from Definition \ref{def-set-Z}. By Remark \ref{rmk-decomp-polyh-Z},  $\mathcal{Z}^{+}=\mathcal{Z}_2^{+} \sqcup \mathcal{Z}_3^{+} \sqcup \mathcal{Z}_4^{+}$ and $\mathcal{Z}^{-}=\mathcal{Z}_2^{-} \sqcup \mathcal{Z}_3^{-} \sqcup \mathcal{Z}_4^{-}$, where $\mathcal{Z}_n^{+}$ (respectively, $\mathcal{Z}_n^{-}$) is the subspace of $\mathcal{Z}^{+}$ (respectively, $\mathcal{Z}^{-}$) parametrizing  $\mathcal{MSP}_n$. We define  $\widehat{\mathcal{Z}}$ as the space obtained from $\mathcal{Z}^{+} \sqcup \mathcal{Z}^{-}$ by identifying a point in $\mathcal{Z}_2^{+}$ with its corresponding point in $\mathcal{Z}_2^{-}$.
\end{defn}

The surface $\widehat{\mathcal{Z}}$ is homeomorphic to a sphere with three holes, without including the boundary of the holes  (see Figure \ref{pts-in-Z-hat-as-HS}). 

\begin{thm} \label{thm-hex-spher-to-spec-polyg}
Define a map  $\Delta^{+} \colon \mathcal{M}^{+}_{\text{unit}}(S) \to \mathcal{MSP}$ by 	$\Delta^{+}([M,h]_{\simeq_\mathcal{M}})=[P_A]_{\text{sim}}$, where $P_A$ is the special polygon associated to the Voronoi region $A$ of $M$. Then $\Delta^{+}$ is a homeomorphism. Similarly, the map  $\Delta^{-} \colon \mathcal{M}^{-}_{\text{unit}}(S) \to \mathcal{MSP}$ defined by $\Delta^{-}([M,h]_{\simeq_\mathcal{M}})=[P_A]_{\text{sim}}$ is also a homeomorphism.
\end{thm}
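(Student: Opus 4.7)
My plan is to verify the four standard properties of $\Delta^{+}$: well-definedness, bijectivity, continuity, and continuity of the inverse. The argument for $\Delta^{-}$ will be entirely symmetric (swapping the roles of $a$ and $b$), so I will only describe $\Delta^{+}$. For well-definedness, I would observe that the Voronoi decomposition is intrinsically defined from the metric together with the two labelled $4\pi/3$-cone points, so any orientation-preserving isometry $g\colon M\to M'$ preserving the cone points of $S$ automatically carries the Voronoi cell $A$ to the Voronoi cell $A'$ and hence induces an isometry between the associated planar Voronoi polygons. Different choices of the shortest geodesic used to cut $A$ produce planar polygons differing by a rigid motion of $\mathbb{E}^{2}$, so they all represent the same $\sim$-class.

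For bijectivity, I would construct an explicit inverse $\Psi^{+}\colon\mathcal{MSP}\to\mathcal{M}^{+}_{\text{unit}}(S)$. Given $[P]_{\text{sim}}$, select the unique representative $P$ in the similarity class with $\text{area}(P)=1/2$; since $P_A$ and $P_B$ are isometric, the resulting hex sphere $Hex(P)$ produced by the gluing of Theorems \ref{thm_spec_polyg_p2}--\ref{thm_spec_polyg_p4} then has unit area. I would equip $Hex(P)$ with an orientation-preserving PL marking $h\colon S\to Hex(P)$ sending $x_{i_a}\mapsto a_P$ and $x_{i_b}\mapsto b_P$, and assigning the two $2\pi/3$-cone points of $S$ to those of $Hex(P)$ so that $d_{Hex(P)}(a,c)\leq d_{Hex(P)}(b,c)$. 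By Proposition \ref{prop-split-mod-sp-hex-str-unit-area}(3) this assignment is unique except when $p=2$, in which case either labelling lands in $\mathcal{M}^{+}\cap\mathcal{M}^{-}$. That $\Psi^{+}\circ\Delta^{+}=\text{id}$ follows because the Voronoi reconstruction of \cite{vor-graphs} produces a hex sphere canonically isometric to $M$, and the marking is forced by the $d_M(a,c)\leq d_M(b,c)$ condition together with the cone-angle data; the other composition is immediate from the construction.

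For continuity in both directions, I would exploit the fact that $\mathbf{d}_{\mathcal{M}(S)}$ and $\mathbf{d}_{\mathcal{MSP}}$ are defined by mutually compatible infima over bi-Lipschitz PL homeomorphisms preserving Voronoi graphs. Any $K$-bi-Lipschitz map $M\to M'$ preserving cone points and Voronoi graphs restricts to a $K$-bi-Lipschitz map $A\to A'$, and descends through the cutting to a $K$-bi-Lipschitz map $\widehat{P_A}\to\widehat{P_{A'}}$ after rescaling the polygons to unit area; by construction this polygon map extends back to the original hex-sphere map. Conversely, a bi-Lipschitz map $\widehat{P}\to\widehat{Q}$ extending to a Voronoi-preserving PL homeomorphism $Hex(\widehat{P})\to Hex(\widehat{Q})$ rescales, after normalising the hex spheres to unit area, to a bi-Lipschitz map between the corresponding marked hex structures. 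Thus both $\Delta^{+}$ and $\Psi^{+}$ are $1$-Lipschitz up to a rescaling factor that tends to $1$ as $K\to 1$, which gives bicontinuity.

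The main obstacle I expect is the coherent treatment of the three strata $\mathcal{MSP}_{2},\mathcal{MSP}_{3},\mathcal{MSP}_{4}$ under the rescaling and extension conditions. When a sequence of $4$-special polygons degenerates to a $3$- or $2$-special limit, the combinatorial type of the Voronoi graph jumps (Remark \ref{rmk_gen_spec_polyg}), and one must verify that the extension condition of Definition \ref{def_met_MSP} is stable enough across strata to keep $\Psi^{+}$ continuous there. This reduces to Proposition \ref{prop_met_MSP} together with the fact that the rescaling which sends $P$ to the area-$1/2$ representative of its similarity class depends continuously on $[P]_{\text{sim}}$; the latter follows from the explicit angle-parameter description of Section \ref{sec-ang-param-spec-polyg}.
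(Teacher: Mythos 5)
Your proposal is correct and follows essentially the same route as the paper: the inverse is built from the area-$\tfrac12$ representative of the similarity class glued via the schemes of Theorems \ref{thm_spec_polyg_p2}--\ref{thm_spec_polyg_p4}, and bicontinuity is obtained by restricting Voronoi-graph-preserving $K$-bi-Lipschitz maps to the Voronoi cells, rescaling, and extending back, exactly as in Steps I--III of the paper. The extra care you take with well-definedness and with the behaviour across the strata $\mathcal{MSP}_2,\mathcal{MSP}_3,\mathcal{MSP}_4$ only fills in details the paper leaves to the reader.
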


\begin{proof} The reader can check that the map  $\Delta^{+}$ is well-defined. We now divide the proof into three steps.

\textbf{$\diamond$ Step I:} \emph{The map  $\Delta^{+} \colon \mathcal{M}^{+}_{\text{unit}}(S) \to \mathcal{MSP}$ is continuous}.

Let $\{[M_n,h_n]_{\simeq_\mathcal{M}}\}_n$ be a sequence of points in $\mathcal{M}^{+}_{\text{unit}}(S)$ converging to a point $[M',h']_{\simeq_\mathcal{M}} \in 
\mathcal{M}^{+}_{\text{unit}}(S)$. This means that there is a sequence $\{K_n\}_n$ converging to $1$, and for each $n$ there is an orientation-preserving PL $K_n$-bi-Lipschitz homeomorphism $g_n \colon M_n \to M'$ such that $g_n \circ h_n|_{\Sigma}=h'|_{\Sigma}$ and $g_n(\Gamma_n)=\Gamma'$. [Here $\Sigma$ is the singular locus of the topological hex sphere $S$, and $\Gamma_n$, $\Gamma'$ are the Voronoi graphs of $M_n$, $M'$, respectively.] 

Let $A_n$ be the Voronoi cell of $M_n$ centered at $a_n$, and let $A'$ be the Voronoi cell of $M'$ centered at $a'$. Standard topological arguments show that $g_n(A_n)=A'$. Thus, $g_n$ induces an orientation-preserving PL $K_n$-bi-Lipschitz homeomorphism $\tilde{g}_n \colon P_{A_n} \to P_{A'}$, where $P_{A_n}$ and $P_{A'}$ are the special polygons associated to the Voronoi cells $A_n$ and $A'$, respectively. The map $\tilde{g}_n \colon P_{A_n} \to P_{A'}$ naturally gives rise to a $K_n$-bi-Lipschitz homeomorphism  $\widehat{g}_n \colon \widehat{P}_{A_n} \to \widehat{P}_{A'}$, which extends to an orientation-preserving PL homeomorphism  $\bar{g}_n \colon Hex(\widehat{P}_{A_n}) \to Hex(\widehat{P}_{A'})$. Further, $\bar{g}_n$ preserves Voronoi graphs.  Thus,  $\Delta^{+}([M_n,h_n]_{\simeq_\mathcal{M}}) \to \Delta^{+}([M',h']_{\simeq_\mathcal{M}})$ as $n \to \infty$.

\textbf{$\diamond$ Step II:} \emph{The map  $\Delta^{+} \colon \mathcal{M}^{+}_{\text{unit}}(S) \to \mathcal{MSP}$ is bijective}.

Let $[P]_{\text{sim}} \in \mathcal{MSP}$. Pick a polygon $P_{1/2}$ in $[P]_{\text{sim}}$ with area $(P_{1/2})=1/2$. Then the hex sphere $M=Hex(P_{1/2})$ has unit area. Pick a marking $h \colon S \to M$, so that $[M,h]_{\simeq_\mathcal{M}} \in \mathcal{M}^{+}_{\text{unit}}(S)$ and $\Delta^{+}([M,h]_{\simeq_\mathcal{M}})= [P]_{\text{sim}}$.

Suppose that $[M,h]_{\simeq_\mathcal{M}}$ and $[M',h']_{\simeq_\mathcal{M}}$ are two points in $\mathcal{M}^{+}_{\text{unit}}(S)$ with $\Delta^{+}([M,h]_{\simeq_\mathcal{M}})=\Delta^{+}([M',h']_{\simeq_\mathcal{M}})$. Then $[P_A]_{\text{sim}}=[P_{A'}]_{\text{sim}}$, where $P_A$ (respectively, $P_{A'}$) is the planar polygon associated to the Voronoi region $A$ (respectively, $A'$) of $M$ (respectively, $M'$). Thus, there is an orientation-preserving similarity of the plane sending $P_A$ to $P_{A'}$.  This similarity is actually an isometry, as the area of $P_A$ equals that of $P_{A'}$. This isometry extends to an isometry of $M=Hex(P_A)$ to $M'=Hex(P_{A'})$ that preserves the cone points and the Voronoi graphs. Hence, $[M,h]_{\simeq_\mathcal{M}}=[M',h']_{\simeq_\mathcal{M}}$.

\textbf{$\diamond$ Step III:} \emph{The inverse map of $\Delta^{+}$ is also continuous}.

This step is proven using the arguments from Step I.
\end{proof}

We now prove that the surface $\widehat{\mathcal{Z}}$  is a parameter space for $\mathcal{M}_{\text{unit}}(S)$.

\begin{thm} \label{thm-mod-spac-hex-spher}
Let $\mathcal{Z}^{+}$ and $\mathcal{Z}^{-}$ be the two copies of $\mathcal{Z}$ from Definition \ref{def-set-Z-hat}. Let $id^{+} \colon \mathcal{Z} \to \mathcal{Z}^{+}$ and $id^{-} \colon \mathcal{Z} \to \mathcal{Z}^{-}$ be the identity maps. Then:

\begin{enumerate}[(i)]
 \item $Par^{+}=id^{+} \circ \Pi \circ \Lambda \circ \Delta^{+}\colon \mathcal{M}^{+}_{\text{unit}}(S) \to \mathcal{Z}^{+}$ is a homeomorphism;
 \item $Par^{-}=id^{-} \circ \Pi \circ \Lambda \circ \Delta^{-}\colon \mathcal{M}^{-}_{\text{unit}}(S) \to \mathcal{Z}^{-}$ is a homeomorphism.
\end{enumerate}
Furthermore, the maps $Par^{+}$ and $Par^{-}$ can be amalgamated to give a homeomorphism $Par \colon \mathcal{M}_{\text{unit}}(S) \to \widehat{\mathcal{Z}}$ such that $Par|_{\mathcal{M}^{+}_{\text{unit}}(S)}=Par^{+}$ and $Par|_{\mathcal{M}^{-}_{\text{unit}}(S)}=Par^{-}$.
\end{thm}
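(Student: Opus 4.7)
The statements (i) and (ii) are immediate: $\Delta^{\pm} \colon \mathcal{M}^{\pm}_{\text{unit}}(S) \to \mathcal{MSP}$ is a homeomorphism by Theorem \ref{thm-hex-spher-to-spec-polyg}, $\Pi \circ \Lambda \colon \mathcal{MSP} \to \mathcal{Z}$ is a homeomorphism by Theorem \ref{thm_mod_spac_polyg}, and $id^{\pm} \colon \mathcal{Z} \to \mathcal{Z}^{\pm}$ is the identity of topological copies, so the compositions $Par^{+}$ and $Par^{-}$ are homeomorphisms onto $\mathcal{Z}^{+}$ and $\mathcal{Z}^{-}$, respectively. Thus the substance of the theorem is the amalgamation.

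To produce $Par$, I first verify that $Par^{+}$ and $Par^{-}$ agree on the overlap of their domains once projected into $\widehat{\mathcal{Z}}$. Set $\mathcal{I}=\mathcal{M}^{+}_{\text{unit}}(S) \cap \mathcal{M}^{-}_{\text{unit}}(S)$. By Proposition \ref{prop-split-mod-sp-hex-str-unit-area}(3), a marked hex sphere lies in $\mathcal{I}$ if and only if its Voronoi cells have exactly two edges, i.e.\ $p=2$. Both $\Delta^{+}$ and $\Delta^{-}$ send such a point to the same class $[P_A]_{\text{sim}} \in \mathcal{MSP}_2$, since they are defined by the identical formula $[M,h]_{\simeq_\mathcal{M}} \mapsto [P_A]_{\text{sim}}$. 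By Remark \ref{rmk-decomp-polyh-Z}, $\Pi \circ \Lambda$ maps $\mathcal{MSP}_2$ bijectively onto $\mathcal{Z}_2$, so $Par^{+}(\mathcal{I}) \subset \mathcal{Z}_2^{+}$ and $Par^{-}(\mathcal{I}) \subset \mathcal{Z}_2^{-}$, with corresponding images under the two maps being exactly the points that Definition \ref{def-set-Z-hat} identifies in $\widehat{\mathcal{Z}}$.

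With this agreement in hand, the amalgamation proceeds by two applications of the pasting lemma. For continuity of $Par$: Proposition \ref{prop-split-mod-sp-hex-str-unit-area}(1)--(2) says that $\mathcal{M}^{\pm}_{\text{unit}}(S)$ are closed in $\mathcal{M}_{\text{unit}}(S)$ and cover it, so the two continuous maps assemble into a continuous map $Par \colon \mathcal{M}_{\text{unit}}(S) \to \widehat{\mathcal{Z}}$. For bijectivity: surjectivity holds since $\widehat{\mathcal{Z}}$ is covered by the images $\mathcal{Z}^{\pm}$, and injectivity follows by cases---if two preimages lie in the same half, injectivity of $Par^{\pm}$ closes the argument; if they lie in different halves, their common image must lie in the identified locus $\mathcal{Z}_2^{+}\cong\mathcal{Z}_2^{-}$, so both preimages lie in $\mathcal{I}$, where either half-map is already injective. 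For continuity of $Par^{-1}$: the copies $\mathcal{Z}^{+}$ and $\mathcal{Z}^{-}$ are closed in $\widehat{\mathcal{Z}}$ and cover it, and the inverses $(Par^{\pm})^{-1}$ agree on $\mathcal{Z}_2^{+} \sim \mathcal{Z}_2^{-}$ by the very same computation, so the pasting lemma produces a continuous inverse.

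The only real obstacle is the verification that the topological characterization of $\mathcal{I}$ (namely $p=2$) corresponds precisely to the identification locus $\mathcal{Z}_2^{+} \sim \mathcal{Z}_2^{-}$ in $\widehat{\mathcal{Z}}$; this is exactly what makes the definitions of $\mathcal{M}^{\pm}_{\text{unit}}(S)$ and of $\widehat{\mathcal{Z}}$ compatible, and it rests on Proposition \ref{prop-split-mod-sp-hex-str-unit-area}(3) together with Remark \ref{rmk-decomp-polyh-Z}. Once this compatibility is established, the rest of the argument is a routine double application of the pasting lemma.
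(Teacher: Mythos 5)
Your proposal is correct and follows essentially the same route as the paper: (i) and (ii) as immediate consequences of Theorems \ref{thm_mod_spac_polyg} and \ref{thm-hex-spher-to-spec-polyg}, then amalgamation via Proposition \ref{prop-split-mod-sp-hex-str-unit-area} and the agreement of $Par^{+}$ and $Par^{-}$ on the $p=2$ locus. The paper simply asserts that the two maps "can be combined," whereas you spell out the pasting-lemma and case-by-case bijectivity details, which is a welcome elaboration rather than a different argument.
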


\begin{proof}
(i) and (ii) follow from theorems \ref{thm_mod_spac_polyg} and \ref{thm-hex-spher-to-spec-polyg}. By Proposition \ref{prop-split-mod-sp-hex-str-unit-area}, the space $\mathcal{M}_{\text{unit}}(S)$ is the union of $\mathcal{M}^{+}_{\text{unit}}(S)$ and $\mathcal{M}^{-}_{\text{unit}}(S)$, which intersect along the space of all $[M,h]_{\simeq_\mathcal{M}} \in \mathcal{M}_{\text{unit}}(S)$ for which both Voronoi cells $A$ and $B$ of $M$ have exactly $2$ edges. Also, $Par^{+}([M,h]_{\simeq_\mathcal{M}})=Par^{-}([M,h]_{\simeq_\mathcal{M}})$ for all $[M,h]_{\simeq_\mathcal{M}} \in \mathcal{M}^{+}_{\text{unit}}(S) \cap \mathcal{M}^{-}_{\text{unit}}(S)$. Therefore, the maps $Par^{+}$ and $Par^{-}$ can be combined to produce a homeomorphism $Par \colon \mathcal{M}_{\text{unit}}(S) \to \widehat{\mathcal{Z}}$ that restricts to $Par^{+}$ on $\mathcal{M}^{+}_{\text{unit}}$ and to  $Par^{-}$ on $\mathcal{M}^{-}_{\text{unit}}$.
\end{proof}

\begin{cor} \label{main-cor}
Let $M$ be a hex sphere of unit area. Then
\begin{enumerate}
  \item $M$ embeds isometrically in the 3-dimensional Euclidean space as the boundary of a tetrahedron.
 \item There exists an open Euclidean annulus, embedded in $M$, which is disjoint from the cone points in $M$. In particular, $M$ has a simple closed geodesic.
 \item There is a simple closed geodesic $\gamma$ in $M$ such that a fractional Dehn twist along $\gamma$ converts $M$ to the double of a perfect parallelogram.
\end{enumerate}
\end{cor}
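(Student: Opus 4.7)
By Theorem~\ref{thm-mod-spac-hex-spher} we may write $M = Hex(P)$ for a special polygon $P$ of area $1/2$, so the plan is to read off each property from the explicit gluing of $P$ with its copy $P_1$ displayed in Figures~\ref{fig_spec_polyg_p2}--\ref{fig_spec_polyg_p4}. I would prove (2) first, since the geodesic it produces is needed in (3).

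For (2), in the generic $4$-special case the sides $c'e'$ and $c''e''$ of $P$ are parallel and of equal length by Definition~\ref{defn_spec_polyg_p3}; they therefore bound a thin rectangular substrip $R \subset P$ whose closure misses every vertex, and the corresponding sides of $P_1$ bound a congruent $R_1$. The identifications of Figure~\ref{fig_spec_polyg_p4} match the two ``vertical'' sides of $R$ with the corresponding vertical sides of $R_1$, so $R \cup R_1$ assembles into an embedded open Euclidean annulus in $M$ disjoint from the cone points. Its core $\gamma$, the union of the midsegments of $R$ and $R_1$, is a simple closed geodesic because both arcs cross the identified edges perpendicularly. The $3$-special case is handled identically, and for the $2$-special case we take instead a neighborhood of the double of a suitable interior line segment of $P$ joining opposite short sides.

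For (1), I would invoke Alexandrov's realization theorem: every closed Euclidean polyhedral metric on $S^2$ whose cone angles all lie in $(0, 2\pi)$ is isometric to the boundary of a convex polytope in $\mathbb{R}^3$, whose vertices are precisely the cone points of the metric. A hex sphere satisfies this hypothesis and has exactly four cone points, so the polytope is a tetrahedron.

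For (3), note that under $Par$ the doubles of perfect parallelograms correspond to the $1$-dimensional substratum $\mathcal{Z}_2^{+} = \mathcal{Z}_2^{-}$ of $\widehat{\mathcal{Z}}$, that is, to the $2$-special stratum. Taking $\gamma$ from (2), I would cut $M$ along $\gamma$ and reglue with a twist of angle $\theta$, producing a continuous one-parameter family $\theta \mapsto M_\theta$ of unit-area hex spheres. In the $4$-special picture the twist shifts the top half of $P$ (the piece containing $a$) along $\gamma$ relative to the bottom half (the piece containing $f'$); I would then claim that at the unique value of $\theta$ for which the parallel sides $c'e'$ and $c''e''$ collapse to points, the polygon degenerates to a $2$-special one in the sense of Remark~\ref{rmk_gen_spec_polyg}, so $M_\theta$ is a double of a perfect parallelogram. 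The $3$-special case is analogous, and for already-$2$-special $P$ we take $\theta = 0$. The main obstacle will be verifying this claim in the $4$-special case: explicitly tracking how the angle parameter $\varphi$ varies with $\theta$, checking that the value $\varphi = 0$ is attained, and confirming that the unit-area normalization is preserved throughout. This is careful but routine bookkeeping on the cut-and-reglue operation, requiring no conceptually new ideas beyond the special-polygon framework already established.
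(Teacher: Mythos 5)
Parts (1) and (2) are essentially salvageable. For (1), invoking Alexandrov's realization theorem is a legitimate shortcut that replaces the paper's explicit construction (the paper assembles the tetrahedron by hand from the polygon $Z$ of Figure \ref{cor-p2}, triangulated into four isosceles faces); both routes share the caveat that the tetrahedron may be degenerate, e.g.\ for the double of a perfect parallelogram. For (2), your geometric idea (the parallel, equal-length sides of a special polygon bound a flat strip that closes up into an annulus missing the cone points) is the right one, but your description of the gluing is wrong: since the cone point $c$ has degree $1$ in the Voronoi graph when $p\in\{3,4\}$, the edge of $\Gamma$ emanating from $c$ is a slit with the cell $A$ on \emph{both} sides, so the sides $c'e'$ and $c''e''$ of $P_A$ are identified \emph{with each other}, not with sides of $P_B$. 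The strip $R\subset P_A$ therefore closes up into an annulus on its own (no $R_1$ is needed), with core parallel to the translation vector $\overrightarrow{c'c''}$ rather than perpendicular to the glued edges. The conclusion survives, but the argument as written rests on a false reading of Figures \ref{fig_spec_polyg_p3}--\ref{fig_spec_polyg_p4}; the paper instead reassembles $P_A\sqcup P_B$ into the polygon $Z_1$ and exhibits the strip $S$ between the lines through $a$ and $b$.

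Part (3) contains the genuine gap. You identify the doubles of perfect parallelograms with the $2$-special stratum $\mathcal{Z}_2$ and therefore aim the fractional Dehn twist at collapsing the parallel sides $c'e'$, $c''e''$, taking $\theta=0$ when $P$ is already $2$-special. That identification is false: a $2$-special hex sphere is the boundary of a tetrahedron all of whose faces are isosceles triangles, and Section \ref{int-pts-Z-as-HS} of the paper locates the doubles of perfect parallelograms on the curve $\alpha=\varphi/2+\pi/6$, which lies almost entirely in the $4$-special stratum and meets the $n=2$ arc in a single point. In particular, a generic $2$-special $M$ requires a \emph{nontrivial} twist, and reaching the $2$-special locus from a $4$-special $M$ would not produce a double of a parallelogram. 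The paper's actual argument twists along the core $\gamma$ of the annulus until the feet of the perpendiculars from $a$ and from $d'$ to $\gamma$ coincide, at which point the four cone points sit at the vertices of a perfect parallelogram $ad'bc''$ and the sphere is visibly its double; this alignment condition on the cone points, not a change of Voronoi combinatorics, is the correct target of the twist. As proposed, your Step (3) would terminate at the wrong hex sphere.
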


\begin{proof}
Let $\varphi$ and $\alpha$ be the angle parameters of $M$. Let $P_{A}$ and $P_{B}$ be the Voronoi polygons of $M$. Then both $P_{A}$ and $P_{B}$ are either $2$-, $3$- or $4$-special. 

In \cite{vor-graphs} the author proves that the polygons $P_{A}$ and $P_{B}$ are isometric, and that the hex sphere $M$ can be reconstructed from $P_{A}$ and $P_{B}$ by gluing pairs of edges of these polygons according to one of $3$ possible combinatorial patterns. These gluing patterns depend on whether the Voronoi polygons are either $2$-, $3$- or $4$-special. We divide the proof into three cases.

\textbf{$\diamond$ Case I:} Both $P_{A}$ and $P_{B}$ are $2$-special.

The gluing pattern to obtain $M$ from $P_{A}$ and $P_{B}$ is that of Figure \ref{fig_spec_polyg_p2}. Let $Z$ be the planar polygon obtained from $P_A \sqcup P_B$ by identifying the edges labeled by {\tiny $\blacktriangleright\blacktriangleright$} in Figure \ref{fig_spec_polyg_p2}. Figure \ref{cor-p2} shows the polygon $Z$ and the identification pattern on its boundary needed to recover $M$.

\begin{figure}[ht!]
\labellist
\small\hair 2pt
\pinlabel $c'$ [b] at -119 622
\pinlabel $c''$ [b] at 145 622
\pinlabel $a$ [b] at 9 554
\pinlabel $d$ [t] at 17 388
\pinlabel $c'''$ [l] at 287 414
\pinlabel $b$ [l] at 155 489
\vspace*{11pt}
\endlabellist
\subfloat[The polygon $Z$]{\label{cor-p2}\includegraphics[width=0.2\textheight]{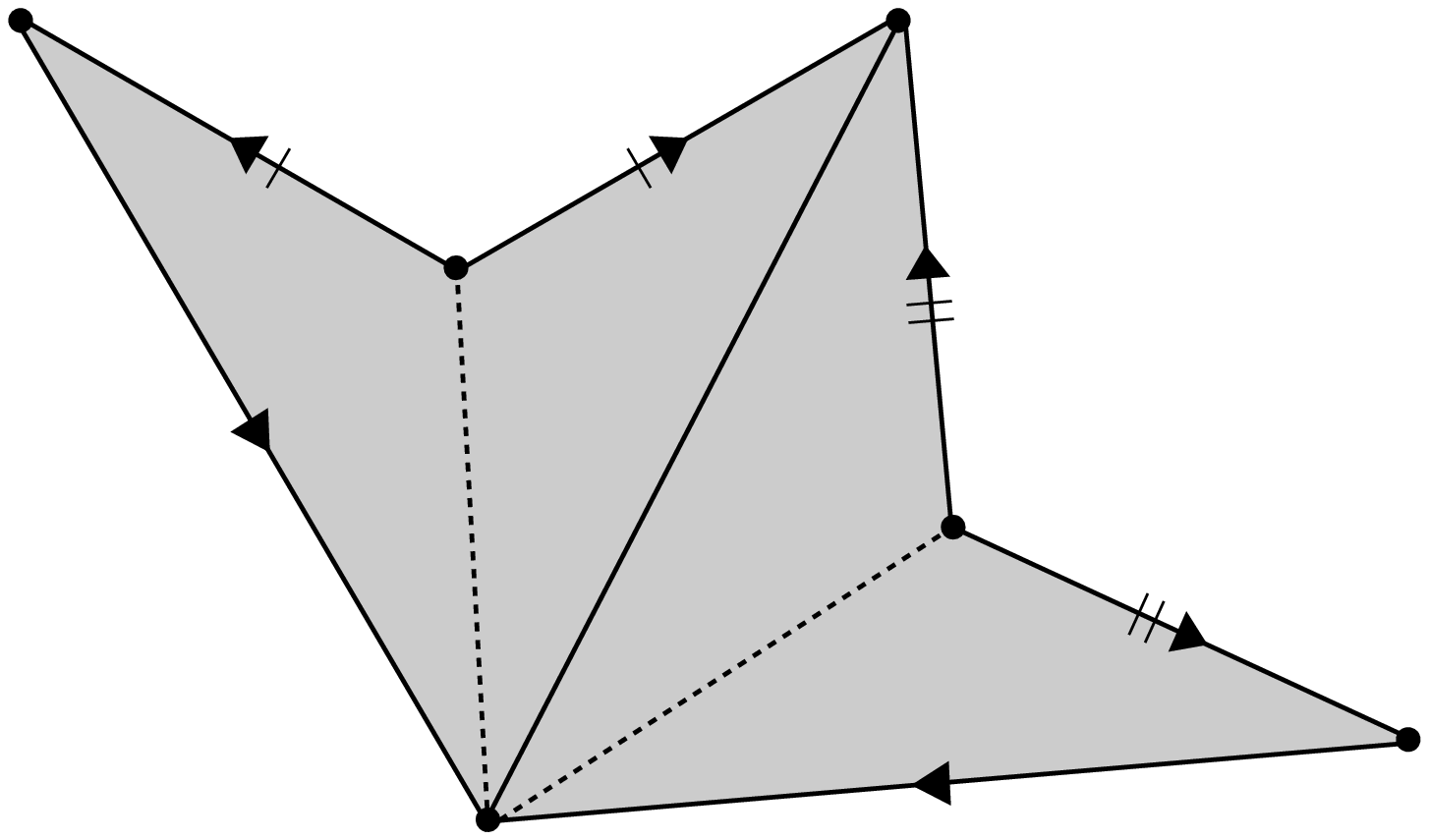}}
\hspace*{0.1\textwidth}
\labellist
\small\hair 2pt
\pinlabel $c'$ [b] at -119 622
\pinlabel $c''$ [b] at 145 622
\pinlabel $a$ [b] at 9 554
\pinlabel $d'$ [t] at 17 388
\pinlabel $d''$ [t] at 298 399
\pinlabel $b$ [t] at 155 472
\pinlabel $l_1$ [t] at 54 553
\pinlabel $l_2$ [b] at 107 467
\pinlabel $S$ at 165 512
\vspace*{11pt}
\endlabellist
\subfloat[The polygon $Z_1$]{\label{cor-p2-(2)}\includegraphics[width=0.2\textheight]{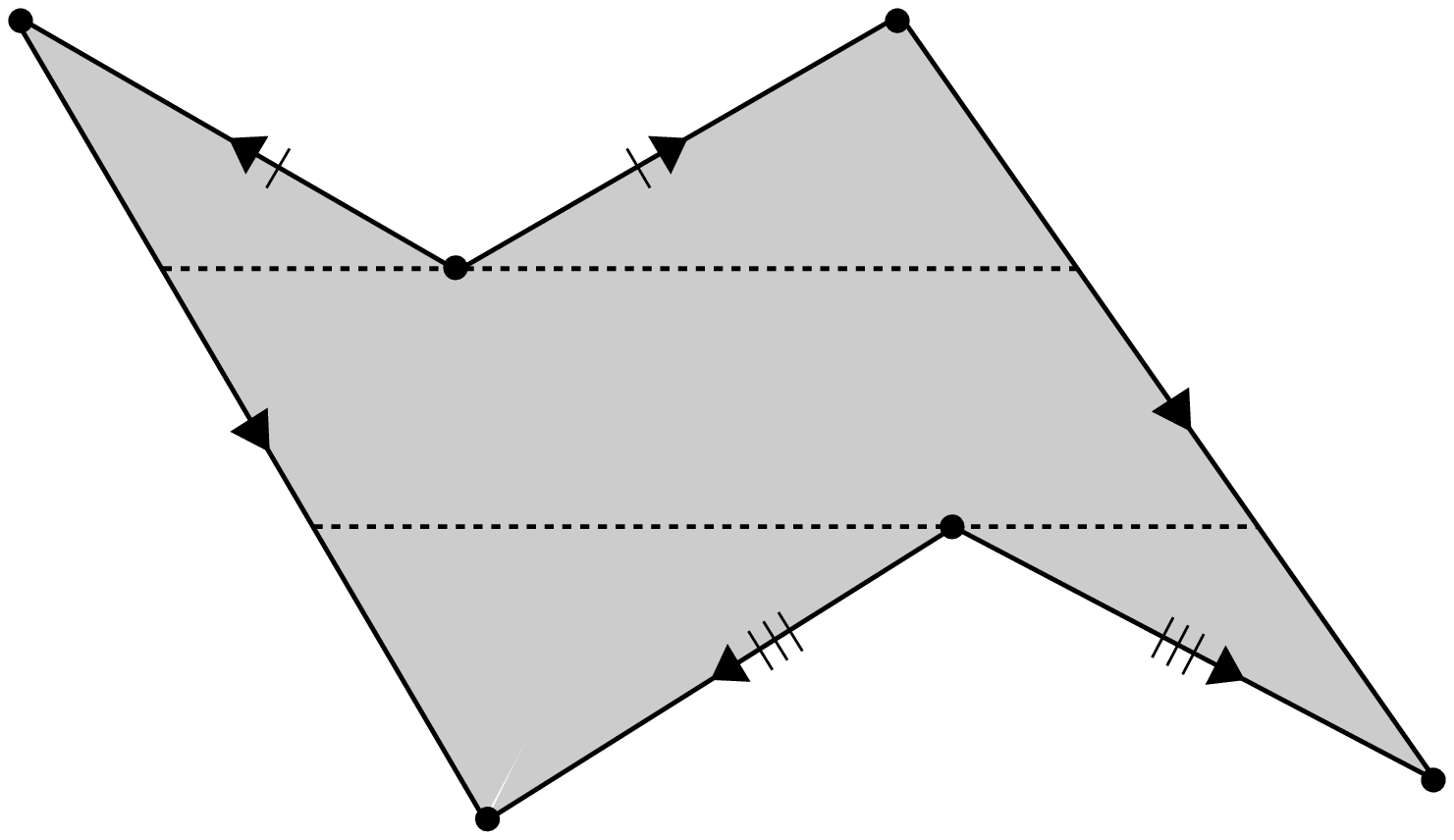}}
\caption{The polygons $Z$ and $Z_1$ in Case I.}
\end{figure}

($1$). Adding the two dashed segments from Figure \ref{cor-p2} divides the polygon $Z$ into four triangles, which gives rise to a triangulation of $M$. After looking at the identification pattern from Figure \ref{cor-p2}, it is obvious that $M$ embeds isometrically in the 3-dimensional Euclidean space as the boundary of a tetrahedron. Each face of this tetrahedron is an isosceles triangle.

($2$). Label the vertices of the polygon $Z$ as in Figure \ref{cor-p2}.  Cut $Z$ along the segment $bd$ and then glue the edges $bc''$ and $bc'''$, obtaining a new planar polygon $Z_1$. The hex sphere $M$ can be recovered by gluing the edges of $Z_1$ as shown in Figure \ref{cor-p2-(2)}. Label the vertices of $Z_1$ as in Figure \ref{cor-p2-(2)}. Let $l_1$ (respectively, $l_2$) be the segment in $Z_1$ that is parallel to the segment $c'c''$ and passes through the vertex $a$ (respectively, the vertex $b$). Let $S$ be the strip in $Z_1$ that is bounded by the segments $l_1$ and $l_2$. Since the edges $c'd'$ and $c''d''$ are parallel, then, after gluing the edges of $Z_1$ as shown in Figure \ref{cor-p2-(2)}, the strip $S$ gives rise to an annulus in $M$. The interior of this annulus satisfies the desired properties, and its core is a simple closed geodesic in $M$. 

($3$). Let $\gamma$ be the core of the annulus from ($2$) of the statement of the corollary. Let $S_a$ (respectively, $S_b$, $S_{c''}$ and $S_{d'}$) be the segment in $Z_1$ that is orthogonal to $\gamma$ and passes through the point $a$ (respectively, $b$, $c''$ and $d'$), see Figure \ref{cor-p2-(2)-bis-1}. Let $y$ (respectively, $z$) be the point where $\gamma$ intersects $S_{a}$ (respectively, $S_{d'}$). Perform a fractional Dehn twist along $\gamma$ until the points $y$ and $z$ coincide. Now it is easy to show that the hex sphere we obtain after the Dehn twist is the double of the perfect parallelogram $ad'bc''$.

\begin{figure}[ht!]
\labellist
\small\hair 3pt
\pinlabel $c'$ [b] at -119 622
\pinlabel $c''$ [b] at 140 622
\pinlabel $a$ [b] at 7 549
\pinlabel $d'$ [t] at 17 388
\pinlabel $d''$ [t] at 294 399
\pinlabel $b$ [t] at 153 474
\pinlabel $\gamma$ [b] at 80 505

\pinlabel $S_a$ [l] at 5 526
\pinlabel $S_b$ [r] at 157 489
\pinlabel $S_{c''}$ [r] at 142 563
\pinlabel $S_{d'}$ [l] at 16 449
\endlabellist
\centering
  \includegraphics[width=0.45\textwidth]{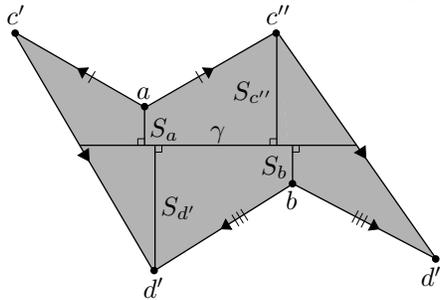}
 \caption{The hex sphere before the fractional Dehn twist along $\gamma$}
  \label{cor-p2-(2)-bis-1}
\end{figure}
The remaining cases are when both polygons $P_{A}$ and $P_{B}$ are either $3$- or $4$-special. In these cases, (1)-(3) can be proved using the same arguments as in Case I. The details are omitted.
\end{proof}

\section{A Picture is Worth a Thousand Words} \label{int-pts-Z-as-HS}

By Theorem \ref{thm-mod-spac-hex-spher}, the surface $\widehat{\mathcal{Z}}$ can be identified with the moduli space of hex spheres of unit area. Given a point in $\widehat{\mathcal{Z}}$, we would like to see the hex sphere this point corresponds to. This is summarized in Figure \ref{pts-in-Z-hat-as-HS}. 

\begin{figure}[ht!]
\vspace*{44pt}
\labellist
\small\hair 3pt
\pinlabel {$(\mathbb{R}_{\geq 0},0)$} [t] at -15 648
\pinlabel {$(\mathbb{R}_{\geq 0},0)$} [t] at 514 648
\pinlabel {$0$} [b] at -87 648
\pinlabel {$0$} [b] at 442 648
 \pinlabel \text{double of an} at 249 545
 \pinlabel \text{equilateral triangle} at 249 525
\pinlabel {$\overbrace{\hspace*{0.31\textwidth}}$} at 114 805
\pinlabel {$\overbrace{\hspace*{0.31\textwidth}}$} at 379 805
\pinlabel {$d_M(a,c) \leq d_M(a,d)$} at 114 825
\pinlabel {$d_M(a,c) \geq d_M(a,d)$} at 378 825
\pinlabel {$n=3$} [b] at 90 716
\pinlabel {$n=3$} [b] at 408 716
\pinlabel {$n=2$} [l] at 245 753
\pinlabel {doubles of} at 126 623
\pinlabel {parallelograms} at 126 603
\pinlabel {doubles of} at 374 623
\pinlabel {parallelograms} at 374 603

\endlabellist
\centering
  \includegraphics[width=0.8\textwidth]{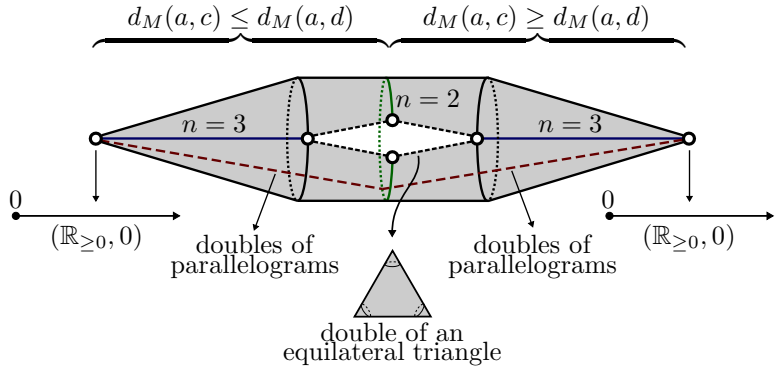}
\vspace*{11pt}
  \caption{Interpreting points in $\widehat{\mathcal{Z}}$ as hex spheres.}
  \label{pts-in-Z-hat-as-HS}
\end{figure}

The surface $\widehat{\mathcal{Z}}$ can be split into the halves $\mathcal{Z}^{+}$ and $\mathcal{Z}^{-}$. The left half $\mathcal{Z}^{+}$ (respectively, $\mathcal{Z}^{-}$) parametrizes the hex spheres $M$ for which $d_M(a,c) \leq d_M(a,d)$ (respectively, $d_M(a,c) \geq d_M(a,d)$). The regions $\mathcal{Z}^{+}$ and $\mathcal{Z}^{-}$ intersect in the open arc labeled as $n=2$ in Figure \ref{pts-in-Z-hat-as-HS}. This arc is defined by the relations $\varphi=0$ and $0<\alpha<\pi/3$ and its points correspond to hex spheres whose Voronoi polygons are $2$-special. These hex spheres are those that embed isometrically in the 3-dimensional Euclidean space as the boundary of a tetrahedron, all of whose faces are isosceles triangles.

There are two arcs in Figure \ref{pts-in-Z-hat-as-HS} labeled as $n=3$: one in $\mathcal{Z}^{+}$ and the other in $\mathcal{Z}^{-}$. These arcs are defined by the relations $\pi/6<\varphi<2\pi/3$ and $\alpha=\varphi-\pi/6$ and their points correspond to hex spheres whose Voronoi polygons are $3$-special. Each of these hex spheres is the double a Euclidean trapezoid with interior angles $\pi/3$, $\pi/3$, $2\pi/3$ and $2\pi/3$.

Points in $\widehat{\mathcal{Z}}$ that are \emph{not} in one of the arcs labeled as either $n=2$ or $n=3$ represent hex spheres whose Voronoi polygons are $4$-special. These form the generic type of hex spheres and, among them, there are some which are especially symmetric: the doubles of perfect parallelograms. The latter are parametrized by the dashed segment from Figure \ref{pts-in-Z-hat-as-HS}  that traverse the back of $\widehat{\mathcal{Z}}$ and whose middle point intersects the arc labeled $n=2$. In terms of the parameters $\varphi$ and $\alpha$, a generic hex sphere is defined by the inequalities $0<\varphi<2\pi/3$, $0<\alpha<\pi/2$ and $\varphi-\pi/6<\alpha<\varphi+\pi/3$. This hex sphere is the double of a perfect parallelogram if, additionally, $\alpha=\varphi/2+\pi/6$.

Recall that $\widehat{\mathcal{Z}}$ is homeomorphic to a sphere with three holes (not including the boundary of the holes). The leftmost and rightmost holes of $\widehat{\mathcal{Z}}$  will be referred to as \emph{small} holes, while the hole in the middle of $\widehat{\mathcal{Z}}$ will be referred to as the \emph{big} hole (see Figure \ref{pts-in-Z-hat-as-HS}).

Each hole in $\widehat{\mathcal{Z}}$ represents a \emph{degenerate} hex sphere, by which we mean a proper metric space that is the Gromov-Hausdorff limit of a sequence of genuine hex spheres. We consider only proper metric spaces because the Gromov-Hausdorff limits are unique when restricted to these spaces (see Corollary $6.11$ from \cite{Cooper}).

The big hole of $\widehat{\mathcal{Z}}$ represents the orbifold $S^2(2\pi/3,2\pi/3,2\pi/3)$, which is the double of an Euclidean equilateral triangle. This is so because every point in the boundary of the big hole is parametrized by a pair $(\varphi,\alpha) \in \mathbb{R}^2$ such that there is a sequence of points $(\varphi_n,\alpha_n) \in \widehat{\mathcal{Z}}$ converging to $(\varphi,\alpha)$ with
\begin{equation} \label{eqn-param-big-holes}
  \text{either } \alpha_n \to 0 \text{ or } \alpha_n \to \varphi+\pi/3
\end{equation}

Let $M_n$ be the hex sphere of unit area with angle parameters $\varphi_n$, $\alpha_n$, and let $a_n$, $b_n$ be the two cone points in $M_n$ of angle $4\pi/3$.  By \eqref{eqn-param-big-holes}, $d_{M_n}(a_n,b_n) \to 0$, and the degenerate hex sphere $S^2(2\pi/3,2\pi/3,2\pi/3)$ corresponds to the limiting case when the cone points $a_n$ and $b_n$ merge.

The small holes of $\widehat{\mathcal{Z}}$ represent non-compact degenerate hex spheres that experience  a dimensional collapsing. More precisely, each of these degenerate hex spheres is 1-dimensional but it is the limit of a sequence of long and skinny (2-dimensional) hex spheres. [The limit is a pointed Gromov-Hausdorff limit because of the non-compactness of the limiting space.] This is explained in the next paragraph. 

Consider the small hole in $\mathcal{Z}^{+}$, which is parametrized by the pair $(\varphi,\alpha)=(2\pi/3,\pi/2) \in \mathbb{R}^2$. There is a sequence of points $(\varphi_n,\alpha_n) \in \mathcal{Z}^{+}$ such that  $\varphi_n \to \varphi=2\pi/3$  and  $\alpha_n \to \alpha=\pi/2$. Let $M_n$ be the hex sphere of unit area  with angle parameters $\varphi_n$, $\alpha_n$.  Let $a_n$, $b_n$ be the two cone points in $M_n$ of angle $4\pi/3$, and let $c_n$, $d_n$ be the two cone points in $M_n$ of angle $2\pi/3$. Then, $d_{M_n}(a_n,c_n) \to 0$ and $d_{M_n}(a_n,d_n) \to \infty$, which imply that $d_{M_n}(b_n,d_n) \to 0$ and $d_{M_n}(b_n,c_n) \to \infty$ (by Theorem 4.4 from \cite{vor-graphs}). These conditions force the hex sphere $M_n$ to become longer and skinnier (by Corollary \ref {main-cor} $(1)$). Furthermore, the pointed Gromov-Hausdorff limit of the sequence $(M_n,a_n)$ is the metric space $(\mathbb{R}_{\geq 0}, 0)$, where $\mathbb{R}_{\geq 0}$ denotes the non-negative reals equipped with usual metric.

In general, the pointed Gromov-Hausdorff limit of a sequence of metric spaces depends on the base point. Thus, the Gromov-Hausdorff limit of the sequence $(M_n,x_n)$ will depend on the choice of the $x_n$'s. For instance, we just saw that if $x_n=a_n$ for all $n$, then the limit of the sequence $(M_n,x_n)$ is $(\mathbb{R}_{\geq 0}, 0)$. The reader can check that we obtain the same limit if, for each $n$, $x_n$ is any other of the cone points of $M_n$. However, if  $x_n$ is a point in the Voronoi graph of $M_n$ for each $n$, then the limit of the sequence $(M_n,x_n)$ is $(\mathbb{R}, 0)$. In a sense, it is more natural to take a cone point of $M_n$ as its base point, since cone points are the only ``distinguished'' points in a singular Euclidean surface. This would mean that $(\mathbb{R}_{\geq 0}, 0)$ is the ``natural'' limit of the sequence $(M_n,x_n)$, and this is why we included it in Figure \ref{pts-in-Z-hat-as-HS}.

The small hole in $\mathcal{Z}^{-}$ can be analyzed using the same arguments from the previous paragraphs.

\bibliographystyle{alpha}
\bibliography{biblio}

\end{document}